\documentclass[a4paper,10pt]{article}
\usepackage{amsfonts,amsmath,amssymb}
\usepackage{amsthm}
\usepackage{geometry}
\usepackage{graphicx}
\usepackage{hyperref} 
\usepackage{mathrsfs}  
\parindent=0.3in

\setlength{\textwidth}{6.2in}
\setlength{\oddsidemargin}{0in}
\setlength{\evensidemargin}{0in}
\setlength{\textheight}{9in}
\setlength{\topmargin}{0.5cm}
\setlength{\headheight}{0in}
\setlength{\headsep}{0in}
\setlength{\parskip}{0pt}

\newtheorem{theorem}{Theorem}[section]
\newtheorem{lemma}{Lemma}[section]
\newtheorem{corollary}{Corollary}[section]
\newtheorem{conjecture}{Conjecture}[section]

\title{The Ramsey Number for Tree Versus Wheel with Odd Order} 

\author
{Yusuf Hafidh and Edy Tri Baskoro\footnote{corresponding author}\\
\\
\normalsize{Combinatorial Mathematics Research Group}\\
\normalsize{Faculty of Mathematics and Natural Sciences}\\
\normalsize{Institut Teknologi Bandung}\\
\normalsize{Jalan Ganesa 10 Bandung, Indonesia}\\
\\
\normalsize{Emails: yusufhafidh@gmail.com, ebaskoro@math.itb.ac.id}\\
}

\date{}

\begin{document}

\maketitle 


\begin{abstract}
Chen et al. (2004) strongly conjectured that $R(T_n,W_m)=2n-1$ if the maximum degree of $T_n$ is small and $m$ is even. Related to the conjecture, it is interesting to know for which tree $T_n$, we have $R(T_n,W_m)>2n-1$ for even $m$.
In this paper, we find the Ramsey number $R(T_n,W_8)$ for tree $T_n$ with the maximum degree of $T_n$ is at least $n-3$, namely $R(T_n,W_8)>2n-1$ for almost all such tree $T_n$.
We also prove that if the maximum degree of $T_n$ is large, then $R(T_n,W_m)$ is a function of both $m$ and $n$. In the end, we refine the conjecture of Chen et al. by giving the condition of small maximum degree.


\end{abstract} 

Keywords: Ramsey number, tree, wheel.

\section{Introduction}

Let $G$ be a graph with the vertex set $V(G)$ and edge set $E(G)$. All graphs in this paper are assumed to be finite, undirected, and simple graphs. The {\em order} of $G$ is the number of vertices of $G$. The graph $\overline{G}$, the
{\em complement} of graph $G$, is obtained from the complete graph on $|G|$ vertices by deleting the edges of $G$. Let $U\subseteq V(G)$, $G[U]$ is the {\em induced subgraph} of $G$ by $U$; that is the maximal subgraph of $G$ with the vertex set $U$. For $v\in V(G)$ and $U,W\subseteq V(G)$, we denote $N_U(v)=\{u\in U : uv\in E(G) \}$, and $E(U,W)$ as the edges between $U$ and $W$. The degree of a vertex $v\in V(G)$ is denoted by $\deg(v)$. The maximum (minimum) degree of all vertices in $G$ is denoted by $\Delta(G)$ ($\delta(G)$), respectively.

Let $K_n$ be a {\em complete graph} on $n$ vertices and 
let $C_n$ be a {\em cycle} on $n$ vertices.
Let $T_n$ be a {\em tree}, namely a connected graph with no cycle, on $n$ vertices. A special
tree called a {\em star} is denoted by $S_n$, namely a tree on $n$ vertices with maximum degree $n-1$. Let $W_m$ be a wheel on $m+1$ vertices that consists of a cycle $C_m$ with one additional vertex being adjacent to all vertices of $C_m$. We use $kG$ to denote the disjoint union of $k$ copies of $G$. 
By using the same notation in \cite{CZZ04}, denote by $S_n(l,m)$ a tree of order $n$ obtained from $S_{n-m\times l}$ by subdividing each of $l$ chosen edges $m$ times, and 
denote by $S_n(l)$ a tree of order $n$ obtained from a star $S_l$ and $S_{n-l}$ by adding an edge joining their centers.

For two graphs $G$ and $H$, a graph $F$ is called $(G,H)$-{\em good} if $F$ contains no $G$ and $\overline{F}$ contains no $H$. Any $(G,H)$-good graph on $n$ vertices is called a $(G,H,n)$-{\em good}. The \textit{Ramsey number} $R(G,H)$ is defined as the smallest positive integer $n$ such that no $(G,H,n)$-good graph exists.

Chv\'{a}tal and Harary (1972) studied the Ramsey numbers for graphs and established the lower bound: $R(G,H)\geq(\chi(H)-1)(c(G)-1)+1$,
where $c(G)$ is the number of vertices of the largest component of $G$, and $\chi(H)$ is the chromatic number of $H$. If $G=T_n$, $H=W_m$ and $n \geq m \geq 3$, we obtain
$R(T_n,W_m)\geq 2n-1$ for even $m$, and $R(T_n,W_m)\geq 3n-2$ for odd $m$.

The computation of the Ramsey number $R(T_n,W_m)$ has been extensively investigated. However, the results are still far from satisfied. For stars $S_n$, the value of $R(S_n,W_m)$ is not always equal to the Chv\'{a}tal-Harary bound \cite{SB01,CZZ04a,HBA05,ZCZ08,ZCC12}, but for trees $T_n$ other than a star and $S_n(1,2)$, all the known Ramsey numbers $R(T_n,W_m)$ are equal to the Chv\'{a}tal-Harary bound \cite{BSM02,CZZ04,CZZ05a,CZZ05b,CZZ06,ZBC16}.

Y. Chen, Y. Zhang, and K. Zhang (2004) strongly conjectured that $R(T_n,W_m)=2n-1$ if the maximum degree of $T_n$ is small and $m$ is even. For a tree $T_n$ with large maximum degree and even $m$, the $R(T_n,W_m)$ is also unknown in general. 
In this paper, we shall determine the Ramsey number $R(T_n,W_8)$ for all trees $T_n$ of order $n$ with the maximum degree of $T_n$ is at least $n-3$.
We also prove that if the maximum degree of $T_n$ is large, then $R(T_n,W_m)$ is a function of both $m$ and $n$. In the end, we refine the conjecture by giving the condition of small maximum degree.

\section{The Ramsey Number $R(T_n,W_m)$}

Related to the revised conjecture proposed by Chen et al. in \cite{CZZ04}, it is interesting to know for which tree $T_n$ we have $R(T_n,W_m)>2n-1$ for even $m$. The first main result deals with the Ramsey number $R(T_n,W_8)$ for all trees $T_n$ with $\Delta(T_n)\geq n-3$ other than a star, as stated in Theorem \ref{RTnW8}. In the second main result, we derive a general lower bound for $R(T_n,W_m)$ for trees $T_n$ with large maximum degree and even $m$. This lower bound is a function of both $m$ and $n$. The third main result discusses the conjecture given by Chen et al., we refine this conjecture by providing the condition of small maximum degree for tree $T_n$.

Trees $T_n$ with $\Delta(T_n)\geq n-3$ will be isomorphic to either $S_n$, $S_n(1,1)$, $S_n(1,2)$, $S_n(2,1)$, or $S_n(3)$.

\begin{theorem}\label{RTnW8} The Ramsey number $R(T_n,W_8)$ for $\Delta(T_n)\geq n-3$ is given below.
\begin{itemize}
	\item $	R(S_n(1,1),W_8)=\begin{cases}
	2n\phantom{-11},\ n\equiv 0\ (\text{mod}\ 2)\\
	2n+1,\ n\equiv 1\ (\text{mod}\ 2)
	\end{cases}; n\geq 5 $
	\item $R(S_n(1,2),W_8)=\begin{cases}
	2n\phantom{-11},\ n\not\equiv 3\ (\text{mod}\ 4)\\
	2n+1,\ n\equiv 3\ (\text{mod}\ 4)
	\end{cases}; n\geq 8 $
	\item $R(S_n(2,1),W_8)=\begin{cases}
	2n-1,\ n\equiv 1\ (\text{mod}\ 2)\\
	2n\phantom{-11},\ n\equiv 0\ (\text{mod}\ 2)
	\end{cases}; n\geq 8 $
	\item $R(S_n(3),W_8)\phantom{,1}=\begin{cases}
	2n-1,\ n\equiv 1\ (\text{mod}\ 2)\\
	2n\phantom{-11},\ n\equiv 0\ (\text{mod}\ 2)
	\end{cases}; n\geq 8 $
\end{itemize}
\end{theorem}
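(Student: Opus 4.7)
The proof splits into matching lower and upper bounds for each of the four tree families $S_n(1,1)$, $S_n(1,2)$, $S_n(2,1)$, $S_n(3)$, with the parity of $n$ dictating when the Chv\'{a}tal--Harary value $2n-1$ is exceeded. My plan is to treat the lower bounds by explicit construction and the upper bounds by a structural argument that exploits the fact that all four trees are perturbations of a star.

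For the lower bounds I would exhibit explicit $(T_n,W_8,N-1)$-good graphs, where $N$ is the claimed Ramsey value. The baseline construction $2K_{n-1}$ has complement $K_{n-1,n-1}$, which is bipartite and hence $W_8$-free (since $W_8$ contains triangles), and avoids $T_n$ since each component has only $n-1$ vertices; this recovers the $2n-1$ bound. To push past $2n-1$ in the cases where this is needed, I would use slight enlargements of $2K_{n-1}$: adding an isolated vertex, or replacing one copy of $K_{n-1}$ by $K_{n-1}$ minus a perfect matching, or using two copies of $K_{n-1}$ together with additional edges across the parts forming a regular bipartite structure. The parity conditions in the statement reflect exactly when such a near-regular structure exists (for instance, a perfect matching on $n-1$ vertices requires $n-1$ to be even), and when the complement, after the modification, still has every neighborhood $C_8$-free. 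For each of the four tree types I would verify separately, by inspecting degree sequences and the shape near any would-be center, that the constructed graph contains no $T_n$.

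For the upper bounds I would suppose, towards contradiction, that $F$ on $N$ vertices is $(T_n,W_8)$-good. The standard tree-embedding lemma says that any graph with $\delta \geq n-1$ contains every tree on $n$ vertices, so $F$ must have a vertex of small degree, equivalently $\overline{F}$ has a vertex $v$ with $\deg_{\overline{F}}(v) \geq N-n+1 \geq n$. Since $\overline{F}$ contains no $W_8$, the induced subgraph $\overline{F}[N_{\overline{F}}(v)]$ must be $C_8$-free. I would then use specific near-star shape of each $T_n$ (recall $\Delta(T_n)\geq n-3$) to embed the tree: the embedding boils down to locating, in $F$, a vertex of degree $n-2$ or $n-3$ whose neighborhood contains a short path of the right length or two short pendant paths, which translates into a purely structural condition on where $\overline{F}$ can avoid both $C_8$ and near-complete subgraphs. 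For the four trees the required extra structure is respectively one length-$2$ branch, one length-$3$ branch, two length-$2$ branches, and a single pendant star; in each sub-case this extra structure is forced by combining the $C_8$-freeness of the neighborhood in $\overline{F}$ with a second counting argument on the remaining vertices.

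The main obstacle is the bookkeeping: four tree types and two (or four, for $S_n(1,2)$) parity classes each require a separate extremal construction and a separate embedding argument, with the target values $2n-1$, $2n$, $2n+1$ separated by only one or two vertices, so the constructions and the arguments must be essentially tight. I expect the hardest single case to be $R(S_n(1,2),W_8)$ with $n \equiv 3 \pmod 4$, since the mod-$4$ condition signals that the extremal graph relies on a regular decomposition that exists only under a joint parity assumption on $n-1$ and $(n+1)/2$; the matching upper bound will likely need a two-step argument extracting a near-bipartite piece of $\overline{F}$ and then counting $C_8$s inside it to force the contradiction.
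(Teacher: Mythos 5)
Your plan has a genuine gap on both sides. For the lower bounds, the enlargements of $2K_{n-1}$ you propose do not work. Adding an isolated vertex to $2K_{n-1}$ fails immediately: $\overline{2K_{n-1}}=K_{n-1,n-1}$ already contains a $C_8$ (for $n\geq 5$), and the new vertex is adjacent in the complement to everything, so it becomes the hub of a $W_8$. Replacing a copy of $K_{n-1}$ by $K_{n-1}$ minus a matching does not increase the vertex count, so it cannot push the bound past $2n-1$; and adding cross edges between the two cliques creates a vertex of degree $n-2$ with a pendant neighbour, i.e.\ an $S_n(1,1)$. The constructions that actually work have the form $\overline{H}\cup K_{n-1}$, where $H$ is a $3$-regular (or $(n-4)$-regular, depending on the tree) graph on $n+1$ or $n+2$ vertices whose components each have fewer than $8$ vertices (e.g.\ disjoint $K_4$'s, with a $K_{3,3}$ or $3C_3$ patch when $4$ does not divide the order). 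Then $\overline{\,\overline{H}\cup K_{n-1}\,}$ is the join of $H$ with an independent set, and every vertex's neighbourhood there is either a union of components of order $<8$ or a small clique joined to an independent set, neither of which contains a $C_8$. The parity and mod-$4$ conditions in the statement come from the existence of such regular unions of small components, not from a perfect matching on $n-1$ vertices.

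For the upper bounds, your starting point (a vertex $v$ with $\deg_{\overline{F}}(v)\geq n$ whose $\overline{F}$-neighbourhood is $C_8$-free) is too weak on its own: by Bondy's theorem $C_8$-freeness only forces a vertex of $F$-degree roughly $|N_{\overline{F}}(v)|/2$ inside that neighbourhood, far short of the degree $n-3$ vertex you need as the centre of the tree. The argument that closes this gap uses the previously known star--wheel values $R(S_{n-1},W_8)$ and $R(S_{n-2},W_8)$ to extract a large star $S_{n-1}$ or $S_{n-2}$ directly, and then analyses the edges between the star's vertex set $U$ and the remaining vertices $W$: if the tree cannot be completed, $E(U,W)$ is (nearly) empty, and then either $\delta(G[W])$ is small, in which case a low-degree vertex of $G[W]$ together with four vertices of $U$ yields a $W_8$ in $\overline{G}$, or $\delta(G[W])$ is large, in which case a minimum-degree lemma (or a regularity-versus-parity contradiction, since an $(n-3)$-regular graph of odd order cannot exist) forces the tree inside $W$. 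A further structural step (if a good graph of order $2n$ exists then it splits as $G_1\cup G_2$ with $G_2$ regular) and a lemma converting an $S_n(1,1)$ into the other three trees are what let the four tree types be handled together rather than by four independent embedding arguments. Without these ingredients your outline cannot be completed as stated.
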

The proof of Theorem \ref{RTnW8} is given in the last section. For $R(S_n,W_8)$, see Theorems \ref{RSnW8even} and \ref{RSnW8odd}.

It has been showed in \cite{CZZ04}, $R(S_n(1,1),W_m)$ is a function of both $m$ and $n$, for {\bf even} m. Precisely, $R(S_n(1,1),W_m)\geq 2n+m/2-3$ for even $m$ and $n=km/2+3$ for any integer $k\geq2$.
It is not a big surprise that the values of $R(S_n(1,2),W_m)$, $R(S_n(2,1),W_m)$, and $R(S_n(3),W_m)$ depends on both $m$ and $n$. 

For even $m\geq 8$, and $n\equiv 4(\text{mod}\ m/2)$, define $G=\overline{H}\cup K_{n-1}$ with $H=(\frac{2n+m-8}{m})K_{m/2}$. Then, $G$ is a $(T,W_m,2n+m/2-5)$-good graph for any tree $T$ with $\Delta(T)\geq n-3$. Hence we have the following theorem.

\begin{theorem}\label{LB}
	For trees $T_n$ with $\Delta(T_n)=n-3$, $n\equiv 4 \;(\text{mod}\ m/2)$ and even $m\geq8$, $R(T_n,W_m)\geq 2n+m/2-4$. $\square$
\end{theorem}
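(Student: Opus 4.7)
The plan is to verify directly that the explicit graph $G=\overline{H}\cup K_{n-1}$ with $H=\frac{2n+m-8}{m}K_{m/2}$, announced in the paragraph preceding the theorem, is a $(T_n,W_m)$-good graph on $2n+m/2-5$ vertices. Since any $(T_n,W_m)$-good graph of order $N$ forces $R(T_n,W_m)\geq N+1$, this will deliver the stated bound. First I would confirm that the hypothesis $n\equiv 4\pmod{m/2}$ makes $k:=(2n+m-8)/m$ a positive integer, so the construction is well defined, and then count $|V(G)| = k\cdot m/2 + (n-1) = (n+m/2-4)+(n-1) = 2n+m/2-5$.

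Next I would show $G$ contains no $T_n$. Because $G$ has exactly two components, and $T_n$ is connected, any copy of $T_n$ must lie inside one of them. The clique $K_{n-1}$ has too few vertices. The other component $\overline{H}$ is the complete multipartite graph $K_{m/2,\ldots,m/2}$ on $n+m/2-4$ vertices whose every vertex has degree $(n+m/2-4)-m/2 = n-4$. Since $\Delta(T_n)=n-3>n-4$, such a $T_n$ cannot be embedded in $\overline{H}$. This step is essentially a one-line degree argument.

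For the $W_m$-free part, I would rewrite $\overline{G}=H\vee \overline{K_{n-1}}$ (join), and suppose toward contradiction that $\overline{G}$ contains a wheel $W_m$ with hub $v$ and rim $C_m\subseteq \overline{G}[N(v)]$. I split on which side of the join $v$ lies. If $v\in V(\overline{K_{n-1}})$, then $N(v)=V(H)$ and the induced subgraph is $H$ itself, a disjoint union of cliques $K_{m/2}$; its longest cycle has length $m/2<m$, so no rim exists. If $v\in V(H)$ sits in the clique $Q\cong K_{m/2}$, then $N(v)$ induces the join $K_{m/2-1}\vee \overline{K_{n-1}}$. Here the main observation is that in any join of a clique with an independent set, a cycle cannot place two independent-set vertices consecutively, so the numbers $p$ of independent and $q$ of clique vertices on the rim satisfy $p\leq q$; combined with $p+q=m$ this forces $q\geq m/2$, contradicting $q\leq m/2-1$.

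The step I expect to require the most care is this last structural lemma about cycles in $K_a\vee \overline{K_b}$; the rest is bookkeeping (checking integrality, component sizes and degrees). Once these three pieces are in place the theorem follows immediately, since $R(T_n,W_m)>|V(G)|=2n+m/2-5$.
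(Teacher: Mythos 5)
Your proposal is correct and follows exactly the paper's approach: the paper exhibits the same graph $G=\overline{H}\cup K_{n-1}$ with $H=\frac{2n+m-8}{m}K_{m/2}$ and simply asserts it is $(T,W_m,2n+m/2-5)$-good, leaving the verification (which you carry out correctly) to the reader. Your degree argument for excluding $T_n$ and the join/independent-set counting argument for excluding $W_m$ from $\overline{G}$ are precisely the intended checks.
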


Moreover, we could modify the graph $H$ above to obtain a lower bound for the other values of $n$. The modification of $H$ could be done by keeping certain properties such as the graph $H$ is $(n-4)$-regular and order $n-4+m/2$. 
The graph $H$ is also an union of some graphs with each has order less than $m$. 
For instance, if $n\equiv 2 \;(\text{mod}\ m/2)$, 
we could define $H=K_{m/2-1,m/2-1}\cup(\frac{2n-4}{m})K_{m/2}$. 
In this case, we can easily verify the following theorem.
\begin{theorem}
	For trees $T_n$ with $\Delta(T_n)=n-3$, $n\equiv 2 \;(\text{mod}\ m/2)$ and even $m\geq8$, $R(T_n,W_m)\geq 2n+m/2-4$. $\square$
\end{theorem}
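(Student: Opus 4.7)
The approach is to take the construction $G = \overline{H} \cup K_{n-1}$ (disjoint union) described just before the theorem and verify directly that it is $(T_n, W_m)$-good; since $|V(G)| = 2n + m/2 - 5$, this yields the claimed bound. The verification splits cleanly into a ``tree'' check (no $T_n$ in $G$) and a ``wheel'' check (no $W_m$ in $\overline{G}$).

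For the tree check, I would first note that both $K_{m/2-1,m/2-1}$ and $K_{m/2}$ are $(m/2-1)$-regular, so $H$ is $(m/2-1)$-regular of order $n + m/2 - 4$, and hence $\overline{H}$ is $(n-4)$-regular. Since $\Delta(T_n) = n-3$, the tree $T_n$ has a vertex that requires a degree-$(n-3)$ host, which is unavailable in $\overline{H}$ (maximum degree $n-4$) and unavailable in $K_{n-1}$ for size reasons. Because $G$ is a disjoint union, this rules out $T_n \subseteq G$.

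For the wheel check, I would use the identity $\overline{G} = H + \overline{K_{n-1}}$ (join) and split on the location of a hypothetical hub $c$ of a $W_m$ in $\overline{G}$. If $c \in \overline{K_{n-1}}$, the required $C_m$ must lie inside $H$; but each component of $H$ has at most $m - 2$ vertices, so no $C_m$ exists there. If $c$ lies in the $K_{m/2-1,m/2-1}$-component, then $N_{\overline{G}}(c)$ induces the complete bipartite graph $K_{m/2-1,\,n-1}$, whose longest cycle is $C_{m-2}$. Finally, if $c$ lies in one of the $K_{m/2}$-components, then $N_{\overline{G}}(c)$ induces a clique $K_{m/2-1}$ completely joined to an independent set of size $n-1$; any $C_m$ here would have to alternate so that no two independent-set vertices are consecutive, forcing at least $m/2$ clique vertices to appear on the cycle, while only $m/2 - 1$ are available.

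The only step that deserves any comment is the last sub-case, where the alternation trick converts the non-adjacencies inside $\overline{K_{n-1}}$ into a shortage of clique vertices; otherwise the argument is purely a matter of matching the regularity and component-size data of $H$ against the parameters $n$ and $m$, which is exactly the ``easily verify'' remark in the paper.
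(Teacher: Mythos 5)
Your verification is correct and is precisely the check the paper leaves to the reader: the same graph $G=\overline{H}\cup K_{n-1}$, the same degree/component-size bookkeeping to exclude $T_n$ (connectedness plus $\Delta(\overline{H})=n-4<n-3$ and $|K_{n-1}|<n$), and the same three-way analysis of the hub's location in $\overline{G}=H+\overline{K_{n-1}}$ to exclude $W_m$, including the correct alternation bound for the $K_{m/2}$-component case. One caveat: the order $|H|=n+m/2-4$ that you assert (and that the whole argument needs, both for $|G|=2n+m/2-5$ and for $\overline{H}$ being $(n-4)$-regular) corresponds to $\tfrac{2n-m-4}{m}$ copies of $K_{m/2}$, not the $\tfrac{2n-4}{m}$ printed in the paper, which would give $|H|=n+m-4$ and break the tree check -- you silently used the intended count rather than deriving the order from the stated formula, so you should make that correction explicit.
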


For even $m$ and $1\leq t\leq m/2-2$, we can get a lower bound for $R(S_n(1,2t),W_m)$ by defining $G=\overline{H}\cup K_{n-1}$ with $H=(\frac{2n+m-2t-4}{m})K_{m/2}$ for $n\equiv t+2 \;(\text{mod}\ m/2)$. Since $G$ is a $(S_n(1,2t),W_m,2n+m/2-t-3)$-good graph, we have the following theorem.
\begin{theorem}\label{RSn2t}
	For $n\equiv t+2 \;(\text{mod}\ m/2)$ and even $m\geq8$, $R(S_n(1,2t),W_m)\geq 2n+m/2-t-2$. $\square$
\end{theorem}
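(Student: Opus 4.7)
The plan is to verify directly that the construction $G=\overline{H}\cup K_{n-1}$ announced just before the theorem is a $(S_n(1,2t),W_m)$-good graph on $2n+m/2-t-3$ vertices; the lower bound $R(S_n(1,2t),W_m)\geq 2n+m/2-t-2$ is then immediate from the definition of the Ramsey number. Writing $n=k(m/2)+t+2$ for some integer $k\geq 1$, one checks that $(2n+m-2t-4)/m=k+1$, so $H=(k+1)K_{m/2}$ and $\overline{H}$ is the complete multipartite graph $K_{m/2,\ldots,m/2}$ with $k+1$ parts $A_0,\ldots,A_k$ of size $m/2$, on $n+m/2-t-2$ vertices. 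Hence $|V(G)|=2n+m/2-t-3$, as desired.

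For the easier direction, I would show $W_m\not\subseteq\overline{G}=H+\overline{K_{n-1}}$ (a join) by splitting on where the hub $h$ sits. If $h\in\overline{K_{n-1}}$, then $N_{\overline{G}}(h)=V(H)$ induces $H$, a disjoint union of cliques $K_{m/2}$, so the longest available cycle is $C_{m/2}$ and no $C_m$ exists. If $h\in V(H)$, sitting in some clique-component $A$, then $N_{\overline{G}}(h)$ induces the join $K_{m/2-1}+\overline{K_{n-1}}$; any $C_m$ here would place its vertices from the independent part into an independent set of $C_m$, which for even $m$ has size at most $m/2$, so at least $m/2$ cycle-vertices must come from the clique of size $m/2-1$, which is impossible.

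The main step is the tree side: $S_n(1,2t)\not\subseteq G$. Since $S_n(1,2t)$ is connected of order $n$ and $|V(K_{n-1})|=n-1$, any copy must lie entirely inside $\overline{H}$. View the tree as a center $c$ of degree $n-2t-1$ incident to $n-2t-2$ leaves, together with a pendant path $c-x_1-x_2-\cdots-x_{2t+1}$, and suppose an embedding places $c\in A_0$. Then all $n-2t-2$ leaves and $x_1$ (totalling $n-2t-1$ vertices) must lie outside $A_0$, i.e., in $A_1\cup\cdots\cup A_k$, which has exactly $k(m/2)=n-t-2$ vertices; only $t-1$ slots remain free outside $A_0$, while $A_0$ still has $m/2-1$ free slots. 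The remaining $2t$ path-vertices $x_2,\ldots,x_{2t+1}$ must then be placed subject to the multipartite rule that consecutive ones lie in distinct parts. The crux is that the indices $i\in\{2,\ldots,2t+1\}$ with $x_i\in A_0$ form a set of non-consecutive positions along the path, so their count is at most $\lceil 2t/2\rceil=t$; consequently at least $t$ of the path-vertices are forced into $A_1\cup\cdots\cup A_k$, but only $t-1$ slots remain there, a contradiction.

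The only non-trivial ingredient is this last non-consecutive packing argument on the subdivided branch; the join/complement calculations for $\overline{G}$ are routine bookkeeping. Combining both sides, $G$ is $(S_n(1,2t),W_m)$-good of order $2n+m/2-t-3$, which establishes the theorem.
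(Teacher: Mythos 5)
Your proposal is correct and takes exactly the paper's approach: the paper exhibits the same graph $G=\overline{H}\cup K_{n-1}$ with $H=(\tfrac{2n+m-2t-4}{m})K_{m/2}$ and simply asserts it is $(S_n(1,2t),W_m,2n+m/2-t-3)$-good, leaving the verification to the reader. Your check of the wheel side via the join structure and of the tree side via the non-consecutive packing of the subdivided branch correctly supplies the details the paper omits.
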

For $t=1$, Theorem \ref{RSn2t} gives a better lower bound for $R(S_n(1,2),W_m)$ compared to Theorem \ref{LB}.
\begin{corollary}
	For $n\equiv 3 \;(\text{mod}\ m/2)$ and even $m\geq8$, $R(S_n(1,2t),W_m)\geq 2n+m/2-3$.
\end{corollary}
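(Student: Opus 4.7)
The plan is to deduce the corollary as an immediate specialization of Theorem \ref{RSn2t}, taking $t = 1$. Under this substitution, the hypothesis $n \equiv t + 2 \pmod{m/2}$ becomes exactly $n \equiv 3 \pmod{m/2}$, and the stated lower bound $R(S_n(1,2t), W_m) \geq 2n + m/2 - t - 2$ becomes $R(S_n(1,2), W_m) \geq 2n + m/2 - 3$, which is precisely the conclusion being claimed. So the proof is essentially one line: apply Theorem \ref{RSn2t} with $t = 1$.

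For completeness I would confirm that the construction underlying Theorem \ref{RSn2t}, namely $G = \overline{H} \cup K_{n-1}$ with $H = \frac{2n+m-2t-4}{m}\,K_{m/2}$, remains valid at $t = 1$. The coefficient $\frac{2n+m-6}{m}$ is a positive integer precisely when $2n \equiv 6 \pmod{m}$ (equivalently $n \equiv 3 \pmod{m/2}$) and $n$ is large enough that the quotient is positive, both of which are present in the hypothesis. Consequently the very same graph already witnesses $(S_n(1,2), W_m)$-goodness on $2n + m/2 - 3 - 1$ vertices, and no new construction is required.

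There is no genuine obstacle here: the corollary is nothing more than the instantiation of Theorem \ref{RSn2t} at a specific value of the parameter $t$. The substantive work --- verifying that $G$ contains no $S_n(1,2)$ and that $\overline{G}$ contains no $W_m$ --- is absorbed into the proof of that theorem, and the matching of bounds is purely arithmetic. The only thing worth highlighting when presenting the proof is the bookkeeping that the $-t-2$ term and the $t+2$ residue line up correctly at $t = 1$, giving both the improved additive constant $-3$ and the stated residue $3$.
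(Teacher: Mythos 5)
Your proposal is correct and matches the paper exactly: the corollary is stated immediately after Theorem \ref{RSn2t} as its $t=1$ instantiation, which is precisely your one-line argument. Your additional check that the coefficient $\frac{2n+m-6}{m}$ is a positive integer under the congruence hypothesis is a harmless (and sensible) piece of bookkeeping that the paper leaves implicit.
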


In \cite{BSM02}, it is conjectured that for every tree $T_n$ other than a star and $n>m$, the Ramsey number $R(T_n,W_m)$ is equal to the Chv\'{a}tal-Harary bound ($R(T_n,W_m)=2n-1$ for even $m \geq 6$, and $R(T_n,W_m)=3n-2$ for odd $m\geq 7$).
For odd $m$, the conjecture is true for $n\geq \frac{m+1}{2}\geq3$ \cite{HBA05} and for all ES-Trees which is believed to be all Trees \cite{ZBC16}. However, the conjecture must be refined for even $m$, see Theorem \ref{RTnW8} and \cite{CZZ04}.
In 2004, Chen et al. believed that the conjecture is true for even $m$ if $T_n$ has small maximum degree. However they did not specify the condition of "small" maximum degree. In this paper, we provide a refinement of the conjecture by giving the condition of small maximum degree as follows.

\begin{conjecture}
	For even $m$ and $n> m\geq 4$,
	\[\Delta(T_n)\leq n-m+2\quad \Rightarrow\quad R(T_n,W_m)=2n-1.\]
\end{conjecture}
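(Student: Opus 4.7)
The plan is to fix an arbitrary graph $G$ on $2n-1$ vertices with $\overline{G}\not\supset W_m$ and produce a copy of $T_n$ in $G$; the matching lower bound $R(T_n,W_m)\geq 2n-1$ is already supplied by the Chv\'atal--Harary inequality.

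The embedding engine is the greedy-BFS lemma: any graph $H$ with $\delta(H)\geq n-1$ contains every tree on $n$ vertices, because when a child is placed along a BFS ordering, fewer than $n-1$ vertices of $H$ are already occupied while its parent has at least $n-1$ neighbours in $H$. Applying this to $G$, I may assume the set $L:=\{v\in V(G):\deg_G(v)\leq n-2\}$ of ``light'' vertices is non-empty. For each $v\in L$, write $A_v:=N_{\overline{G}}(v)$; then $|A_v|\geq n-1\geq m$, and any $C_m$ inside $\overline{G}[A_v]$ would close up with $v$ into a copy of $W_m$ in $\overline{G}$, so $\overline{G}[A_v]$ is $C_m$-free.

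The next step converts local $C_m$-freeness into dense $G$-structure. For even $m=2k$, the Bondy--Simonovits even-cycle bound gives $e(\overline{G}[A_v])=O(|A_v|^{1+1/k})$, so $G[A_v]$ is almost complete; in particular some $u\in A_v$ has $G$-degree at least $n-m+1>\Delta(T_n)-1$ inside $A_v$. I would root $T_n$ at a vertex $r$ of maximum degree, embed $r$ at $u$, and extend along a BFS ordering of $T_n$. The hypothesis $\Delta(T_n)\leq n-m+2$ is applied at each embedding step to match the slack produced by the Bondy--Simonovits count, and the remaining vertices of $A_v$ provide the room for the rest of the tree. Light vertices that get pulled into the embedding are steered onto leaves of $T_n$ (every tree has at least $\Delta(T_n)$ leaves, which gives some flexibility), or, failing that, onto low-demand internal vertices identified by a centroidal decomposition.

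The principal obstruction --- and the reason this is posed as a conjecture rather than a theorem --- is the \emph{global} interaction of many light vertices whose sets $A_v$ interlock. Bondy--Simonovits is a counting bound, not a structural one, so $\overline{G}[A_v]$ could concentrate all of its edges on a small subset of $A_v$ and thereby destroy the would-be high-degree anchor, or force the embedding to route through an unexpectedly sparse region. To push the argument through, one would either need a stability version of the even-cycle extremal result --- forcing near-extremal $C_m$-free graphs to resemble a specific bipartite incidence structure --- or a more flexible decomposition of $T_n$ that distributes its demand across several candidate anchors. Pinning down the tightness of the threshold $n-m+2$, as opposed to $n-m+3$, is where the combinatorial core of the argument must live, and is the step I expect to consume the bulk of the effort.
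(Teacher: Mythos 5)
First, a framing point: the statement you are proving is stated in the paper as a \emph{conjecture}, with no proof given; the paper only records that it is known for $m=4$ and $m=6$ by results in the cited literature, and that the degree threshold cannot be raised because $R(S_n(1,m-4),W_m)\geq 2n$ while $\Delta(S_n(1,m-4))=n-m+3$. So there is no paper proof to compare against, and your text --- candidly, and to your credit --- is a research plan rather than a proof. Judged as a proof it has a genuine gap, which you yourself locate correctly: the step from ``$\overline{G}[A_v]$ is $C_m$-free, hence sparse by Bondy--Simonovits'' to ``$G[A_v]$ admits an embedding of $T_n$'' does not go through. The set $A_v$ has barely more than $n$ vertices, so you are embedding an almost-spanning tree into $G[A_v]$; for that you need minimum-degree (or at least structured) information, whereas an edge-count bound on $\overline{G}[A_v]$ only controls the average degree and permits a few vertices of $A_v$ to have huge $\overline{G}$-degree inside $A_v$, exactly the configuration that strands a greedy BFS embedding. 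Two further concrete problems: (i) Bondy--Simonovits is asymptotic, with constants of order $k\cdot N^{1+1/k}$, and gives nothing in the conjecture's actual range $n>m\geq 4$ where $n$ may be comparable to $m$ --- indeed the extremal graphs in the paper, such as $\overline{(\frac{2n+m-8}{m})K_{m/2}}\cup K_{n-1}$, are unions of cliques $K_{n-1}$ and complements of small regular graphs, a regime where counting bounds say nothing; (ii) your argument never genuinely consumes the hypothesis $\Delta(T_n)\leq n-m+2$. A decisive sanity check is to run your plan on $T_n=S_n(1,m-4)$ with $\Delta=n-m+3$ and $n\equiv 0\ (\mathrm{mod}\ m/2)$: the paper exhibits a good graph on $2n-1$ vertices for this tree, so any valid proof must break at the boundary, and yours does not visibly do so.

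If you want a route that matches how the known cases ($m=4,6$, and the $W_8$ results in this paper) are actually handled, the working template is structural rather than extremal: use the star--wheel Ramsey numbers to find a large star $S_{n-1}$ or $S_{n-2}$ in $G$, split $V(G)$ into the star's vertex set $U$ and the remainder $W$, observe that the absence of the target tree forces $E(U,W)$ and $G[U]$ to be nearly empty, and then build a $W_m$ in $\overline{G}$ from a low-degree vertex of $G[W]$ as hub together with vertices of $U$ and non-neighbours in $W$ as the rim (this is exactly the mechanism of Lemmas \ref{Lm2}, \ref{Lm3} and Theorems \ref{Th1}--\ref{Th6}). The obstacle to the general conjecture is that this template is tailored to trees of very large maximum degree; for trees with $\Delta(T_n)\leq n-m+2$ one needs a different decomposition of $T_n$, which is precisely why the statement remains open.
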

The conjecture is true for $m=4$ (\cite{BSM02}) and $m=6$ (\cite{CZZ05a,CZZ05b,CZZ06}). We cannot increase the condition of the maximum degree because Theorem \ref{RSn2t} gives the following corollary.

\begin{corollary}
	For $n\equiv 0 \;(\text{mod}\ m/2)$ and even $m\geq8$, $\Delta(S_n(1,m-4))= n-m+3$ and $R(S_n(1,m-4),W_m)\geq 2n$. $\square$
\end{corollary}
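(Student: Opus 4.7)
The plan is to obtain this corollary as an immediate specialization of Theorem \ref{RSn2t}. First I would set $t = m/2 - 2$ in that theorem. With this choice, $2t = m-4$, so the tree $S_n(1,2t)$ is literally $S_n(1,m-4)$; the hypothesis $n \equiv t+2 \pmod{m/2}$ collapses to $n \equiv 0 \pmod{m/2}$, which matches the hypothesis of the corollary; and the lower bound $2n + m/2 - t - 2$ in Theorem \ref{RSn2t} simplifies to $2n$. Also, the integrality check needed for the construction $H = ((2n+m-2t-4)/m)K_{m/2}$ to make sense reduces to $m \mid 2n$, which is exactly $n \equiv 0 \pmod{m/2}$. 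Thus the Ramsey-number inequality $R(S_n(1,m-4),W_m) \geq 2n$ follows with no further work.

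For the degree assertion, I would unpack the notation $S_n(l,k)$ from the introduction: $S_n(1,m-4)$ is obtained from the star $S_{n-(m-4)} = S_{n-m+4}$ by subdividing one chosen edge $m-4$ times. The center of $S_{n-m+4}$ has degree $n-m+3$; subdivision only inserts vertices of degree $2$ along the chosen edge and leaves the center's degree unchanged, while every other original vertex is a leaf of degree $1$. Hence $\Delta(S_n(1,m-4)) = n-m+3$ as claimed.

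I do not foresee a genuine obstacle: the corollary is a pure parameter-matching on top of Theorem \ref{RSn2t} combined with a one-line degree count. Its role in the paper is structural, namely to exhibit a concrete tree $T_n$ with $\Delta(T_n) = n-m+3$ whose Ramsey number against $W_m$ already strictly exceeds $2n-1$, thereby showing that the degree condition $\Delta(T_n) \leq n-m+2$ in the preceding refined conjecture is sharp and cannot be relaxed to $\Delta(T_n) \leq n-m+3$.
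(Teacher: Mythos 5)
Your proposal is correct and matches the paper's intent exactly: the corollary is stated with only a $\square$, i.e.\ as an immediate consequence of Theorem \ref{RSn2t} with $t=m/2-2$ (which lies in the admissible range $1\leq t\leq m/2-2$ since $m\geq 8$), and your parameter matching together with the degree count for the center of $S_{n-m+4}$ is precisely that argument.
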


\section{Proof of Theorem \ref{RTnW8}}

We give some known Ramsey number and lemmas we use to prove Theorem $\ref{RTnW8}$.
\begin{theorem}\cite{ZCZ08}\label{RSnW8even}
	$R(S_n,W_8)=2n+2,\ \text{for even}\ n\geq6$.
\end{theorem}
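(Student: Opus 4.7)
The result splits naturally into a lower-bound construction, showing $R(S_n,W_8)\geq 2n+2$, and an upper-bound argument that no graph $F$ on $2n+2$ vertices is $(S_n,W_8)$-good.

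For the lower bound I would exhibit a graph $G$ on $2n+1$ vertices with $\Delta(G)\leq n-2$ whose complement contains no $W_8$. Since $n$ is even, an $(n-2)$-regular graph on $2n+1$ vertices exists, and the plan is to build $G$ as a vertex-disjoint union of carefully-sized pieces so that every $(n+2)$-vertex neighbourhood in $\overline{G}$ is a complete multipartite graph with at most one part of size $\geq 4$. Since a complete multipartite graph requires two parts of size $\geq 4$ to contain $C_8$, this forbids $W_8$. Calibrating the piece sizes to meet both the degree requirement $\Delta(G)=n-2$ and the structural requirement on every neighbourhood in $\overline{G}$ is the combinatorial crux.

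For the upper bound, suppose $F$ is a graph on $2n+2$ vertices with $\Delta(F)\leq n-2$ (i.e.\ no $S_n$). Then $\delta(\overline{F})\geq n+3>(2n+2)/2$, so by Dirac $\overline{F}$ is Hamiltonian and by Bondy it is pancyclic; in particular $\overline{F}\supseteq C_8$. To produce $W_8$ it suffices to fit some $C_8$ inside a single $\overline{F}$-neighbourhood. I would pick $h\in V(\overline{F})$ of maximum degree and set $H=N_{\overline{F}}(h)$; a direct double count yields
\[
\delta(\overline{F}[H])\geq |H|+1-n.
\]
When $|H|\geq 2n-2$ this is at least $|H|/2$, so Bondy applied inside $\overline{F}[H]$ produces $C_8$, hence a $W_8$ with hub $h$. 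When instead $|H|$ is close to the lower bound $n+3$, I would fix a $C_8$ in $\overline{F}$ (furnished by pancyclicity) and search for a common $\overline{F}$-neighbour of its eight vertices via a defect inclusion-exclusion, leveraging the very rigid structure of a near-$(n+3)$-regular $\overline{F}$.

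The main obstacle is the near-regular branch of the upper bound: when $\delta(\overline{F})$ only just exceeds half the order, no neighbourhood inherits Dirac-level density, and a straight Bondy-inside-the-neighbourhood argument is out of reach. I expect the proof to hinge on a structural case analysis of the few local configurations $F$ can realise subject to $\Delta(F)\leq n-2$ and the parity condition $2\mid n$; the latter is presumably what distinguishes the even-$n$ value $2n+2$ from the odd-$n$ answer in the companion Theorem \ref{RSnW8odd}.
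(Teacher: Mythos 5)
First, a point of comparison: the paper does not prove this statement at all --- Theorem~\ref{RSnW8even} is quoted verbatim from \cite{ZCZ08} and used as a black box (to extract an $S_{n-1}$ in the proofs of Theorem~\ref{Th1} and Corollary~\ref{Cr1}). So there is no in-paper proof to match your proposal against; it must stand on its own, and as written it has genuine gaps on both sides of the bound.

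For the lower bound, no good graph on $2n+1$ vertices is actually exhibited, and the structural criterion you propose to calibrate against is false: it is not true that a complete multipartite graph needs two parts of size at least $4$ to contain $C_8$. The graphs $K_{2,2,2,2}$ and $K_{4,1,1,1,1}$ each contain a Hamiltonian $C_8$ and have at most one part of size $\geq 4$; the correct criterion is that $K_{n_1,\dots,n_r}$ contains $C_8$ if and only if $\sum_i \min(n_i,4)\geq 8$, which is far more restrictive than what you assume. A construction that does work, in the same style as the ones this paper uses elsewhere, is $G=K_{n-1}\cup\overline{H}$ with $H=\tfrac{n+2}{4}K_4$ for $n\equiv 2\pmod 4$ and $H=K_{3,3}\cup\tfrac{n-4}{4}K_4$ for $n\equiv 0\pmod 4$: then $\Delta(G)=n-2$, every neighbourhood in $\overline{G}$ is either $H$ itself or the join of $\overline{K_{n-1}}$ with a component-neighbourhood of size $3$, and one checks each is $C_8$-free (for the join, note $C_8$ has independence number $4$, so it cannot use five vertices of $\overline{K_{n-1}}$). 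For the upper bound, your Bondy-inside-the-neighbourhood branch requires $|H|\geq 2n-4$, i.e.\ $\deg_F(h)\leq 5$, which leaves the entire range $n+3\leq |H|\leq 2n-5$ --- the generic case --- to an unspecified ``defect inclusion-exclusion.'' That cannot be a union bound: the eight cycle vertices each exclude up to $n-2$ candidates as common neighbour, and $8(n-2)$ vastly exceeds $2n+2$, so a common neighbour of an arbitrary $C_8$ need not exist. The actual argument in \cite{ZCZ08} is a substantial structural case analysis precisely in this regime (and it is there that the parity of $n$ enters); your proposal correctly identifies this as the obstacle but does not overcome it.
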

\begin{theorem}\cite{ZCC12}\label{RSnW8odd}
	$R(S_n,W_8)=2n+1,\ \text{for odd}\ n\geq5$.
\end{theorem}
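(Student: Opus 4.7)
The plan is to establish $R(S_n,W_8)=2n+1$ for odd $n\geq 5$ by proving the matching lower and upper bounds separately.

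For the lower bound, I would construct an $(S_n,W_8)$-good graph $G$ on $2n$ vertices. Avoiding $S_n$ forces $\Delta(G)\leq n-2$, so the natural tight candidate is an $(n-2)$-regular graph on $2n$ vertices; such graphs exist because $2n(n-2)$ is even. The complement $\overline{G}$ is then $(n+1)$-regular on $2n$ vertices, and the job is to choose $G$ so that no vertex's open neighborhood in $\overline{G}$ contains $C_8$. A circulant candidate such as $G=C_{2n}\langle 1,2,\ldots,(n-3)/2,\,n\rangle$, or a modification of $2K_{n-1}$ by removing an edge and joining the two components through carefully chosen extra vertices, is well-suited; for these choices every open neighborhood in $\overline{G}$ has bipartite or clique structure with parts too small to contain $C_8$, which one verifies directly.

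For the upper bound, let $F$ be any graph on $2n+1$ vertices with $\Delta(F)\leq n-2$; the goal is to find $W_8$ inside $\overline{F}$. The degree condition yields $\delta(\overline{F})\geq n+2>(2n+1)/2$, so by Dirac's theorem $\overline{F}$ is Hamiltonian, and by Bondy's pancyclic theorem (using $|E(\overline{F})|\geq(2n+1)(n+2)/2$ and that the odd order $2n+1$ rules out the balanced bipartite exception) $\overline{F}$ contains $C_k$ for every $3\leq k\leq 2n+1$, in particular $C_8$. A parity observation is crucial here: $(2n+1)(n-2)$ is odd since both factors are odd, so $F$ cannot be $(n-2)$-regular and some vertex $u$ must satisfy $\deg_{\overline{F}}(u)\geq n+3$. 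Writing $N=N_{\overline{F}}(u)$, a standard count gives that $\overline{F}[N]$ has $|N|\geq n+3$ vertices with minimum degree at least $|N|-n+2\geq 5$. If one can locate $C_8\subseteq\overline{F}[N]$, then $u$ together with that cycle produces $W_8\subseteq\overline{F}$.

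The main obstacle is extracting a cycle of length exactly $8$ inside $\overline{F}[N]$. For $n=5$ the argument is immediate: $|N|=8$ and $\delta(\overline{F}[N])\geq 4\geq |N|/2$, so Dirac delivers a Hamilton cycle of length $8$. For larger odd $n$ the relative minimum degree inside $N$ drops below $|N|/2$, so Dirac no longer applies inside $\overline{F}[N]$ directly. I would therefore either invoke a stronger specific-cycle-length extremal result (Erd\H{o}s--Gallai or Bondy--Simonovits style) or, more likely, set up a structural dichotomy: if no $\overline{F}[N(v)]$ contains $C_8$, then each such induced neighborhood is highly constrained (close to a blow-up of a small graph or to a bipartite graph with unbalanced parts), and one derives a contradiction with the global bound $\delta(\overline{F})\geq n+2$ on $2n+1$ vertices by counting common non-neighbors across many candidate hubs. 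Unifying these structural cases for every odd $n\geq 5$ is the delicate part, and I would expect the final argument to split into a direct Dirac/Bondy treatment for small $n$ and a hub-counting argument exploiting the parity-forced high-degree vertex for large $n$.
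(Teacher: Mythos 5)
First, note that the paper does not prove this statement at all: Theorem \ref{RSnW8odd} is quoted from \cite{ZCC12}, so there is no in-paper proof to compare against, and your attempt has to stand on its own. It does not, for two reasons. For the lower bound, the one concrete construction you offer fails: take $n=7$, so $G=C_{14}\langle 1,2,7\rangle$ and $\overline{G}=C_{14}\langle 3,4,5,6\rangle$. The $\overline{G}$-neighborhood of $0$ is $\{3,4,5,6,8,9,10,11\}$, and $3\,6\,10\,4\,9\,5\,11\,8\,3$ is a $C_8$ in the induced subgraph (all consecutive circular differences lie in $\{3,4,5,6\}$), so $\overline{G}\supseteq W_8$; the same happens for $n=9$ with the cycle $4\,8\,13\,7\,11\,5\,10\,14\,4$. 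Your claim that ``every open neighborhood in $\overline{G}$ has bipartite or clique structure with parts too small to contain $C_8$'' is simply false for these circulants, and the alternative ``modification of $2K_{n-1}$'' is too vague to check (note that $\overline{2K_{n-1}}\supseteq K_{n-1,n-1}\supseteq C_8$ sits inside the co-neighborhood of any added vertex with at least four co-neighbours in each clique, so this route needs real care). A valid family of $(S_n,W_8,2n)$-good graphs must be exhibited and verified; for $n=5$ the Petersen graph works trivially because co-neighborhoods have only $6$ vertices, but you give no working construction for general odd $n$.

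For the upper bound, your reduction is sound only for $n=5$. The parity argument producing a vertex $u$ with $\deg_{\overline{F}}(u)\geq n+3$ is correct, but the resulting bound is $\delta(\overline{F}[N])\geq |N|-n+1$ (not $|N|-n+2$), which is about $4$ when $|N|=n+3$; for $n\geq 7$ this is far below $|N|/2$, so Dirac/Bondy give nothing, and neither Erd\H{o}s--Gallai (which controls circumference, not a cycle of exact length $8$) nor Bondy--Simonovits (which needs $\Omega(|N|^{5/4})$ edges for a $C_8$, far more than the $\approx 2|N|$ you have) applies. You acknowledge this and substitute a list of possible strategies (``structural dichotomy,'' ``hub-counting'') for an actual argument. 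That missing step --- forcing a $C_8$ inside some co-neighborhood, or otherwise locating a $W_8$ --- is precisely the substance of the theorem, so the proposal as written does not constitute a proof for any odd $n\geq 7$.
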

\begin{lemma} \cite{CZZ04}
	\label{Lm1}
	Let $G$ be a graph of order $n\geq6$ with $\delta(G)\geq n-3$, then $G$ contains $S_n(3)$ and $S_n(2,1)$.
\end{lemma}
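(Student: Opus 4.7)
The hypothesis $\delta(G)\geq n-3$ is equivalent to $\Delta(\overline{G})\leq 2$, so every vertex of $G$ has at most two non-neighbors. My plan for both trees is the same: pick a vertex $c\in V(G)$ to play the role of the (unique) vertex of degree $n-3$ in the target tree, and place the non-neighbors of $c$ at the pendant positions of the tree that are \emph{not} adjacent to $c$. This way the remaining $n-4$ (or $n-5$) vertices are automatically neighbors of $c$ and may serve as leaves of $c$; the only thing left to verify is that the short ``tail'' between $c$ and its pendants can be embedded.

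For $S_n(3)$, label the target tree by $c,v_2,v_1,v_3$ together with $n-4$ leaves of $c$, where $cv_2, v_2v_1, v_2v_3$ are the non-leaf edges. Fix any $c\in V(G)$ and write $A_c=\{x_1,x_2\}$ for its non-neighbor set, assuming $|A_c|=2$ (smaller cases are strictly easier). Set $v_1=x_1$ and $v_3=x_2$; it remains to find $v_2\in V(G)\setminus\{c,x_1,x_2\}$ adjacent in $G$ to each of $c,x_1,x_2$. Such a $v_2$ is automatically adjacent to $c$ since the only non-neighbors of $c$ are the excluded $x_1,x_2$; and since $c$ is already a non-neighbor of each $x_i$, each $x_i$ has at most one further non-neighbor, so at most $1+1=2$ vertices of $V(G)\setminus\{c,x_1,x_2\}$ are forbidden. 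Thus at least $(n-3)-2=n-5\geq 1$ candidates for $v_2$ remain once $n\geq 6$.

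For $S_n(2,1)$, label the target tree by a center $c$ of degree $n-3$ adjacent to $u_1,u_2$ and to $n-5$ further leaves, with $u_iv_i$ an edge for $i=1,2$. Again fix $c$, write $A_c=\{x_1,x_2\}$, and set $v_i=x_i$. One now needs \emph{two distinct} vertices $u_1,u_2\in V(G)\setminus\{c,x_1,x_2\}$ with $u_i$ adjacent to $x_i$. Letting $C_i$ denote the set of candidates for $u_i$, the same non-neighbor count gives $|C_i|\geq (n-3)-1=n-4$, which is $\geq 2$ for $n\geq 6$. The main obstacle is ensuring $u_1\neq u_2$; but $|C_1|\geq 2$ lets one pick $u_1\in C_1$ first and then $u_2\in C_2\setminus\{u_1\}$, which is non-empty since $|C_2|\geq 2$. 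The $|A_c|\leq 1$ cases introduce additional freedom and go through by the same counting, completing the construction for $n\geq 6$.
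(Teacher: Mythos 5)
The paper never proves this lemma: it is imported verbatim from \cite{CZZ04}, so there is no in-paper argument to compare yours against. Judged on its own, your proof strategy is sound and self-contained: the reformulation $\delta(G)\geq n-3\iff\Delta(\overline{G})\leq 2$, followed by anchoring the (at most two) non-neighbors of the chosen center $c$ at the distance-two positions of the target tree, is exactly the right idea, and your main case $|A_c|=2$ is complete and correct for both $S_n(3)$ and $S_n(2,1)$. It is also essentially the same kind of neighborhood counting the authors do use when they prove the analogous Lemma \ref{Lm3} (there phrased as $|N(w_1)\cap N(w_2)|+|N(w_1)\cap N(w_3)|\geq n-2$, etc.), so your argument fits the paper's toolkit.

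The one place you are too quick is the claim that the $|A_c|\leq 1$ cases ``go through by the same counting'' for $S_n(2,1)$. There the second leg $u_2v_2$ has no non-neighbor of $c$ to anchor it, and if you first fix $u_1$ and then look for an edge $u_2v_2$ in the remaining $n-3$ vertices, the count only guarantees $n-6$ neighbors per vertex inside that set --- which is $0$ at the boundary $n=6$. Indeed, for $n=6$, $A_c=\{x_1\}$ and $u_1$ chosen badly, the leftover triple can be independent (e.g. $G=K_{3,3}$ plus an edge), so the recipe as literally stated can stall even though the tree is present. The fix is just to reorder the choices: first pick any edge $u_2v_2$ inside $V(G)\setminus\{c,x_1\}$ (each vertex there has at least $n-5\geq 1$ neighbors inside the set), then pick $u_1\in N(x_1)\setminus\{u_2,v_2\}$, which is nonempty since $|N(x_1)|\geq n-3$; the $|A_c|=0$ case similarly reduces to finding two independent edges in $G-c$, a graph on $n-1\geq 5$ vertices of minimum degree at least $2$. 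With that one sentence added, the proof is complete.
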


\begin{lemma}
	\label{Lm2}
	Let $G$ be a graph of order $2n$, $n\geq8$. 
	If $G$ contains $S_n(1,1)$ and $\overline{G}$ contains no $W_8$, 
	then $G$ must contain $S_n(1,2)$, $S_n(2,1)$ and $S_n(3)$.
\end{lemma}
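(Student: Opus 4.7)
I fix an embedding of $S_n(1,1)$ in $G$: let $c$ be the center, $L=\{x_1,\ldots,x_{n-3}\}$ the $n-3$ leaves of $c$, $u\in N_G(c)$ the unique degree-$2$ tree vertex, and $\ell$ the leaf of $u$. Write $Z=V(G)\setminus(L\cup\{c,u,\ell\})$, so $|Z|=n\geq 8$. Each of the three target trees arises from $S_n(1,1)$ by attaching one new vertex at a distinguished position: a new leaf of $\ell$ yields $S_n(1,2)$, a new leaf attached via some $x_i\in L$ (turning $x_i$ into a degree-$2$ internal vertex) yields $S_n(2,1)$, and a second leaf of $u$ yields $S_n(3)$. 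Because $c$ has $n-2$ tree-neighbors in the $S_n(1,1)$, one leaf of $L$ may be dropped to accommodate the new vertex, so for each target it is enough to locate a single \emph{extension edge} in $G$: an edge $\ell z$ with $z\in L\cup Z$ for $S_n(1,2)$; an edge $x_iz$ with $z\in (L\setminus\{x_i\})\cup Z$ for $S_n(2,1)$; or an edge $uz$ with $z\in L\cup Z$ for $S_n(3)$.

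\textbf{The clean step: $S_n(2,1)$.} I would argue by contradiction. If no extension edge of the stated type exists, then $G[L]$ is edgeless and $E_G(L,Z)=\emptyset$. In $\overline{G}$ this makes $L$ a clique on $n-3\geq 5$ vertices and $(L,Z)$ complete bipartite. Pick $a_1,\ldots,a_4\in L$, $b_1,\ldots,b_4\in Z$ and $a_5\in L\setminus\{a_1,\ldots,a_4\}$. The cycle $a_1 b_1 a_2 b_2 a_3 b_3 a_4 b_4 a_1$ lies in $\overline{G}$ (each edge is an $L$-$Z$ pair), and $a_5$ is $\overline{G}$-adjacent to all eight of its vertices (to the $a_i$ via the $L$-clique, to the $b_j$ via complete bipartiteness). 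Hence $W_8\subseteq\overline{G}$, contradicting the hypothesis.

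\textbf{The delicate steps: $S_n(3)$ and $S_n(1,2)$.} Failure of the $S_n(3)$ extension makes $N_G(u)\subseteq\{c,\ell\}$, whence $u$ is $\overline{G}$-adjacent to every vertex of $L\cup Z$; the parallel failure for $S_n(1,2)$ makes $\ell$ $\overline{G}$-adjacent to all of $L\cup Z$. In each case it suffices to find a $C_8\subseteq\overline{G}[L\cup Z]$ to root $W_8$ at the offending vertex. The main obstacle is that the failure now constrains a single vertex only, so I would exploit the flexibility to re-embed $S_n(1,1)$: any $x_i\in L$ admitting a further $G$-neighbor $y$ can take over the role of $u$, with $y$ playing $\ell$. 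Propagating the failure hypothesis across every such re-embedding forces strong structural restrictions on the first and second neighborhoods of $c$. Concretely, in the $S_n(3)$ case it yields $\deg_G(v)\leq 2$ for every $v\in N_G(c)$, so each $x_i\in L$ has at most one $G$-neighbor in $Z$; choosing any four $x_i$'s blocks at most four $z$'s, and pigeonhole (using $|Z|=n\geq 8$ and $|L|\geq 5$) supplies four $L$-vertices and four $Z$-vertices with no $G$-edges between them, giving a $K_{4,4}\subseteq\overline{G}[L\cup Z]$ and thus the required $C_8$. In the $S_n(1,2)$ case one instead obtains that each $z\in Z$ has at most one $G$-neighbor in $L$, and a symmetric pigeonhole produces the same $K_{4,4}$. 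The principal technical hurdles are (i) making the re-embedding argument watertight across all valid swaps, (ii) handling the boundary case $|N_G(c)|=n-2$ where an $S_n(3)$-embedding through some $x_i\in L$ with both leaves of $a$ inside $N_G(c)$ leaves only $n-5$ candidates for the $n-4$ leaves of $c$, and (iii) verifying the pigeonhole counts for the smallest admissible values of $n$, where one may need a finer look at $\overline{G}[L\cup Z]$ than a direct $K_{4,4}$-argument.
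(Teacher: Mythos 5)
Your overall strategy is the same as the paper's: fix the $S_n(1,1)$, translate the absence of each target tree into forbidden edges, and build a $W_8$ in $\overline{G}$ whose hub is a vertex forced to be $\overline{G}$-adjacent to everything. Your $S_n(2,1)$ step is complete and in fact slightly cleaner than the paper's (which routes through a minimum-degree argument and Lemma \ref{Lm1}; your choice of hub inside $L$ makes that unnecessary). Your $S_n(3)$ step is also sound in substance, but the re-embedding machinery and the worry in your hurdle (ii) are superfluous: if some $x_i\in L$ has two neighbours $z_1,z_2\in Z$, you get $S_n(3)$ directly by rooting the two leaves at $x_i$ and drawing the $n-4$ leaves of $c$ from $(L\setminus\{x_i\})\cup\{u\}$, which has $n-3$ elements; no swap of roles and no count near $|N_G(c)|=n-2$ is needed. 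So ``each $x_i$ has at most one $G$-neighbour in $Z$'' comes for free, and since $|Z|=n\geq 8$, four $x_i$'s block at most four $z$'s and the $K_{4,4}$ exists.

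The genuine gap is in the $S_n(1,2)$ step. There the constraint runs the other way --- each $z\in Z$ has at most one $G$-neighbour in $L$ --- and the ``symmetric pigeonhole'' does not close: four chosen $z$'s can block four \emph{distinct} vertices of $L$, and $|L|=n-3$ is only $5$ when $n=8$ (and $6,7$ when $n=9,10$), so no four unblocked $x$'s need remain. Concretely, with $n=8$, $L=\{x_1,\dots,x_5\}$, $Z=\{z_1,\dots,z_8\}$ and $G$-edges $z_1x_1,z_2x_1,z_3x_2,z_4x_2,z_5x_3,z_6x_3,z_7x_4,z_8x_5$, every choice of four $x$'s and four $z$'s meets a $G$-edge, so no $K_{4,4}$ sits in $\overline{G}$ between $L$ and $Z$; yet this configuration is consistent with everything you have derived. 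You flag this as hurdle (iii) but offer no repair, and the small values $n=8,10$ are exactly the ones Theorem \ref{Th3} needs. The paper closes this by an extra dichotomy: either some $x_i$ has at least three neighbours in $Z$, in which case those three $z$'s (having used up their single $L$-neighbour on $x_i$) are $\overline{G}$-complete to $L\setminus\{x_i\}$ and a rim is assembled around them; or every $x_i$ has at most two neighbours in $Z$, in which case any four $x$'s and four $z$'s span, in $\overline{G}$, a $K_{4,4}$ minus a bipartite graph with degree at most $1$ on the $Z$-side and at most $2$ on the $L$-side, which still contains a $C_8$ (this last containment itself needs a short verification the paper glosses over). Without some such additional argument your proof of the $S_n(1,2)$ case is incomplete.
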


\begin{proof} 
	Let $G$ be a graph satisfying the above assumptions. Let $\{u_0,u_1,\cdots,u_{n-1}\}$ be the set of the vertices
	of $S_n(1,1)$ in $G$ with $u_0$ as the hub and $u_0 u_1,u_1 u_{n-1}\in E(G)$. 
	Let $U=\{u_2,u_3,\cdots,u_{n-2}\}$ and $W=V(G)-V(S_n(1,1))=\{w_1,w_2,\cdots,w_n\}$.
	
	First, we prove that $G$ contains $S_n(2,1)$. Assume $G$ contains no $S_n(2,1)$ then $E(U,W)=\emptyset$ and $E(G[U])=\emptyset$. If $\delta(G[W])\geq n-3$ then by Lemma \ref{Lm1}, $W$ contains a $S_n(2,1)$, therefore $\delta(G[W])\leq n-4$ and $\Delta(\overline{G}[W])\geq 3$. Let $w_1\in W$ with $\{w_2,w_3,w_4\}\subseteq N_{\overline{G[W]}}(w_1)$. This implies that the subgraph of $\overline{G}$ induced by 
	$\{w_1,w_2,w_3,w_4,u_2,u_3,u_4,u_5,$ $u_6\}$ contains  a $W_8$ with the hub $w_1$  and $w_2u_2w_3u_3w_4u_4u_5u_6w_2$ as the cycle, a contradiction.
	
	Next we prove that $G$ contains $S_n(3)$. Assume $G$ contains no $S_n(3)$ then 
	$N(u_1)\subseteq\{u_0,u_{n-1}\}$ and $|N_W(u_i)|\leq 1$ for $i \in [2,n-2]$. 
	Since $E(U,W)\leq n-3$ and $|W|=n\geq8$, there exist $w_1,w_2,w_3,w_4$ with $N_U(w_i)\leq1$ for $i=1,2,3,4$. 
	This means that the subgraph of $\overline{G}$ induced by $\{u_1,u_2,u_3,u_4,u_5,w_1,w_2,w_3,w_4\}$ 
	contains a $W_8$ with $u_1$ as the hub, a contradiction.
	
	Finally we prove that $G$ contains $S_n(1,2)$. 
	Assume $G$ contains no $S_n(1,2)$, then 
	$N(u_{n-1})\subseteq\{u_0,u_1\}$ and 
	$|N_U(w)|\leq1$ for each vertex $w\in W$. 
	If there is a vertex in $U$ with at least $3$ neighbors in $W$, say $\{w_1,w_2,w_3\}\subseteq{N_W(u_2)}$, then the subgraph of $\overline{G}$ induced by 
	$\{u_3,u_4,u_5,u_6,u_{n-1},$ $w_1,w_2,w_3,w_4\}$ contains a $W_8$ with $u_{n-1}$ as the hub, a contradiction. If there is no vertex in $U$ with at least $3$ neighbors in $W$ then any four vertices $u_2,u_3,u_4,u_5$ in $U$ and any four vertices $w_1,w_2,w_3,w_4$ in $W$ together with $u_{n-1}$ will induce a $S_n(1,2)$ in $\overline{G}$ with $u_{n-1}$ as the hub, a contradiction. 
\end{proof}

\begin{lemma}
	\label{Lm3}
	If $G$ be a graph of order $n\geq9$ with $\delta(G)\geq n-4$ and $\Delta(G)\geq n-3$, then $G$ contains $S_n(3)$ and $S_n(2,1)$.
\end{lemma}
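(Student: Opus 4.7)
The plan is to let $v$ be a vertex of maximum degree in $G$, so $\deg(v)\ge n-3$, and embed both $S_n(3)$ and $S_n(2,1)$ with $v$ playing the role of the unique vertex of degree $n-3$ in each tree. Set $A=N(v)$ and $B=V(G)\setminus(\{v\}\cup A)$, so $|A|\ge n-3$ and $|B|\le 2$. The minimum-degree hypothesis $\delta(G)\ge n-4$ says every vertex has at most three non-neighbors; in particular, every $b\in B$ satisfies $|N(b)\cap A|\ge n-5$, since $b\notin N(v)$ and at most one neighbor of $b$ can lie in $B\setminus\{b\}$. I will split on $|B|\in\{0,1,2\}$ for each tree.

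For $S_n(3)$ one needs $u\in A$ (the degree-$3$ center of the small star) together with two vertices $\ell_1,\ell_2\in N(u)\setminus\{v,u\}$; the remaining $n-4$ vertices become leaves of $v$, forcing $B\subseteq\{\ell_1,\ell_2\}$. For $|B|\le 1$ this is easy: pick any $u\in A$ (choosing $u\in N(b)\cap A$ when $|B|=1$), then select $\ell_1,\ell_2$ from the at least $n-6$ neighbors of $u$ inside $V\setminus(\{v,u\}\cup B)$. The critical case is $|B|=\{b_1,b_2\}$, where we need $u\in N(b_1)\cap N(b_2)\cap A$; inclusion--exclusion inside $A$ (which has size $n-3$) gives
\[
|N(b_1)\cap N(b_2)\cap A| \;\ge\; 2(n-5)-(n-3) \;=\; n-7 \;\ge\; 2
\]
for $n\ge 9$, so $u$ exists.

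For $S_n(2,1)$ one picks distinct $u_1,u_2\in A$ and distinct leaves $\ell_i\in N(u_i)\setminus\{v,u_1,u_2\}$ with $\ell_1\ne\ell_2$ and $B\subseteq\{\ell_1,\ell_2\}$; again all other vertices become leaves of $v$. For $|B|\le 1$ a greedy construction works: when $|B|=1$ with $B=\{b\}$, set $\ell_1=b$, choose $u_1\in N(b)\cap A$, then pick $u_2\in A\setminus\{u_1\}$ and finally $\ell_2\in N(u_2)\setminus\{v,u_1,u_2,b\}$, which is non-empty because $\deg(u_2)\ge n-4$. The tight case $|B|=\{b_1,b_2\}$ forces $\{\ell_1,\ell_2\}=B$ and requires a system of distinct representatives in $A$ for $\{b_1,b_2\}$; Hall's condition is immediate since $|N(b_i)\cap A|\ge n-5\ge 2$ and $|(N(b_1)\cup N(b_2))\cap A|\ge n-5\ge 2$. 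The main obstacle in both embeddings is precisely the $|B|=2$ scenario, where $v$ has two non-neighbors that must be absorbed into the tree as arm-leaves rather than as leaves of $v$; the slack $n-7\ge 2$ afforded by the hypothesis $n\ge 9$ is exactly what makes both the common-neighbor count and the matching argument close.
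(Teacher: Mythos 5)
Your proof is correct and follows essentially the same route as the paper's: anchor the embedding at a maximum-degree vertex, observe it has at most two non-neighbours, and use $\delta(G)\ge n-4$ to find a common neighbour in $N(v)$ (for $S_n(3)$) or two distinct representatives (for $S_n(2,1)$) to absorb those non-neighbours, with the same count $2(n-5)-(n-3)=n-7\ge 2$ that the paper phrases as $|N(w_1)\cap N(w_2)|+|N(w_1)\cap N(w_3)|\ge n-2$. One wording slip worth fixing: in the $S_n(3)$, $|B|=1$ case you say to pick $\ell_1,\ell_2$ from the neighbours of $u$ inside $V\setminus(\{v,u\}\cup B)$, which would leave $b$ uncovered by the tree --- you clearly intend $\ell_1=b$ and $\ell_2\in N(u)\setminus\{v,u,b\}$, exactly as you handle the parallel $S_n(2,1)$ case.
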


\begin{proof}
	Let $V(G)=\{w_1,w_2,\cdots,w_n\}$ and $w_1$ be the vertex with degree at least $n-3$ and $w_4,w_5,\cdots,w_n\in N(w_1)$. First, we prove that $G$ contains a $S_n(3)$. 
	If $w_2,w_3\notin N(w_1)$ or $w_2w_3\notin E(G)$, then since $\delta(G)\geq n-4$ and $n\geq9$, $|N(w_1)\cap N(w_2)|+|N(w_1)\cap N(w_3)|\geq n-5 + n-5 \geq n-2$ which implies $N(w_1)\cap N(w_2)\cap N(w_3)\ne\emptyset$, and hence $G$ contains a $S_n(3)$. Otherwise, without loss of generality $w_2\in N(w_1)$ and $w_2w_3\in E(G)$. Since $\delta(G)\geq n-4$ and $n \geq 9$, we have $\deg(w_2)\geq5$ which implies $N(w_1)\cap N(w_2)\ne\emptyset$, and hence $G$ contains a $S_n(3)$.
	
	Next, we prove $G$ contains a $S_n(2,1)$. Since $\delta(G)\geq n-4$ and $n \geq 9$, we have $\deg(w_i)\geq5$ for all $i$. Therefore $|N(w_1)\cap N(w_2)|\geq3$ and $|N(w_1)\cap N(w_3)|\geq3$. It is not hard to see that $G$ contains a $S_n(2,1)$.
\end{proof}

\begin{center}
\subsection*{Proof of Theorem \ref{RTnW8}}
\end{center}

We will divide Theorem $\ref{RTnW8}$ into smaller theorems and prove each of them separately.

\begin{theorem}
	\label{Th1}
	$R(S_n(1,1),W_8)=2n+1$ for odd $n \geq 5$.
\end{theorem}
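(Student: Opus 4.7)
The plan is to prove $R(S_n(1,1),W_8)=2n+1$ for odd $n\geq 5$ by establishing matching lower and upper bounds.

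For the lower bound I would exhibit a $(S_n(1,1),W_8,2n)$-good graph $G=K_{n-1}\cup H$, where $H$ is an $(n-3)$-regular graph on $n+1$ vertices whose complement $\overline H$ is $3$-regular and $C_8$-free. Since $n$ is odd, both $n-3$ and $n+1$ are even, and such $H$ exists: concretely, take $\overline H$ to be a vertex-disjoint union of copies of $K_4$ and $K_{3,3}$, whose components have cycles of length at most $6$. Then $G$ is $S_n(1,1)$-free, for the $K_{n-1}$ component is too small and every vertex of $H$ has degree $n-3<n-2$, so $H$ has no vertex that could serve as the center of $S_n(1,1)$. For the complement, $\overline G=\overline H\vee\overline{K_{n-1}}$: a hub in the $\overline H$-side has at most $3$ neighbors in $\overline H$ together with the $n-1$ independent vertices of $\overline{K_{n-1}}$, but any $C_8$ in that neighborhood would need at least four vertices from the independent side, which combined with the bound of three from the other side totals only seven vertices; a hub in the $\overline{K_{n-1}}$-side sees exactly $\overline H$, which is $C_8$-free by construction.

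For the upper bound let $G$ have $2n+1$ vertices and suppose $\overline G$ contains no $W_8$. Theorem \ref{RSnW8odd} yields $S_n\subseteq G$, so there is a vertex $v$ with $\deg_G(v)\geq n-1$; pick $v$ of maximum degree. I would then split into a simple dichotomy. If some $u\in N(v)$ has a second $G$-neighbor $w\neq v$, set $u_0=v$, $u_1=u$, $u_{n-1}=w$, and pick any $n-3$ further leaves $u_2,\ldots,u_{n-2}$ from $N(v)\setminus\{u,w\}$; this produces $S_n(1,1)\subseteq G$. Otherwise every neighbor of $v$ is pendant, so $A:=\{v\}\cup N(v)$ is an isolated star-component with $|A|\geq n$, and $B:=V(G)\setminus A$ satisfies $|B|\leq n+1$ with no $G$-edges between $A$ and $B$.

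The last step is to derive a $W_8$ in $\overline G$, contradicting our assumption. Taking any $u\in N(v)$ as candidate hub, its $\overline G$-neighborhood $V(G)\setminus\{u,v\}$ contains the clique $K_{|N(v)|-1}$ on $N(v)\setminus\{u\}$ (since $N(v)$ is independent in $G$) together with $B$, with every edge between them present in $\overline G$ (since $G$ has no $A$-$B$ edges). When $|N(v)|-1\geq 8$, which holds automatically for odd $n\geq 11$, the clique alone contains $C_8$. For the smaller odd values $n\in\{5,7,9\}$ one builds an explicit $C_8$ by interleaving the clique-vertices with vertices of $B$, using the complete bipartite cross-edges for free and needing only a short path or $P_3$ inside $\overline G[B]$ to close the cycle. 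The main obstacle is the degenerate configuration in which $\overline G[B]$ is too sparse to supply such a path, i.e.\ $\Delta(\overline G[B])\leq 1$; in that case $\delta(G[B])\geq |B|-2\geq n-1$ forces $G[B]$ to be $(n-1)$-regular, and every vertex $b\in B$ becomes a new maximum-degree vertex whose neighbors all have $G$-degree $n-1\geq 2$, so the first-paragraph dichotomy applied to $b$ produces $S_n(1,1)\subseteq G[B]\subseteq G$. Organising this combinatorial bookkeeping uniformly for $n\in\{5,7,9\}$ is the real technical challenge.
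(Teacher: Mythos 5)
Your argument is correct, and the upper bound follows a genuinely different route from the paper's. The paper extracts an $S_{n-1}$ (a star with center degree $n-2$) via Theorem~\ref{RSnW8even}, which forces a separate ad hoc argument for $n=5$ using Bondy's pancyclicity theorem because $R(S_4,W_8)$ is not covered by the cited results; it then shows the star is isolated from the remaining $n+2$ vertices and splits on $\delta(G[W])$, taking the wheel hub from $W$. You instead invoke Theorem~\ref{RSnW8odd} directly to get an $S_n$ (center degree $n-1$), which is available uniformly for all odd $n\geq 5$ and so avoids the Bondy detour entirely; your dichotomy (either a leaf of the star has a second neighbour, giving $S_n(1,1)$ immediately, or the star is a whole component $A$) then lets you take the hub to be a \emph{leaf} $u$ of the star, whose $\overline{G}$-neighbourhood is everything except $u$ and the center and hence contains the clique $\overline{G}[N(v)\setminus\{u\}]$ of size $\deg(v)-1\geq n-2$ joined completely to $B$. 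That clique kills all $n\geq 11$ outright, and the interleaved $C_8$ needs no edges inside $\overline{G}[B]$ whenever $\deg(v)-1\geq 4$, so the only genuinely delicate configuration is $n=5$ with $\deg(v)=4$, where your fallback ($\overline{G}[B]$ has no $P_3$, hence $\delta(G[B])\geq |B|-2=n-1$ and the first dichotomy applied inside $B$ yields $S_5(1,1)$) closes the gap. The trade is clear: the paper pays with a special $n=5$ argument up front, you pay with the small-$n$ cycle bookkeeping at the end, and both are sound. Two minor imprecisions worth fixing: $\Delta(\overline{G}[B])\leq 1$ gives $\delta(G[B])\geq |B|-2$ but does \emph{not} force $G[B]$ to be $(n-1)$-regular (a vertex may have degree $|B|-1$); and in the lower-bound verification the correct count is that a $C_8$ in the neighbourhood of a hub from the $\overline{H}$-side would use at most $3$ vertices from $N_{\overline{H}}(h)$ and at most $4$ pairwise nonconsecutive vertices from the independent set, totalling at most $7<8$. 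Neither affects the validity of the argument.
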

\begin{proof} Let us consider the graph $G=\overline{(\frac{n+1}{4})K_4}\cup K_{n-1}$, 
	for $n\equiv 3(\text{mod}\ 4)$ and $G=\overline{K_{3,3}\cup(\frac{n-5}{4})K_4}\cup K_{n-1}$, for $n\equiv 1(\text{mod}\ 4)$. It is obvious that $G$ contains no $S_n(1,1)$ and $\overline{G}$ contains no $W_8$. 
	Hence, we have that $R(S_n(1,1),W_8)\geq2n+1$ for odd $n$.
	
	In order to show that $R(S_n(1,1),W_8) \leq 2n+1$ for odd $n \geq 5$, consider any graph $G$ of order $2n+1$ containing no $S_n(1,1)$ and $W_8\not\subseteq\overline{G}$. First, we will show that $G$ must contain a $S_{n-1}$. In the case of $n\geq7$, by Theorem \ref{RSnW8even} we obtain that $R(S_{n-1},W_8)=2(n-1)+2=2n$ for odd $n\geq7$. 
	Since $\overline{G}$ contains no $W_8$, then $G$ must contain a $S_{n-1}$.
	
	Now, consider if $n=5$. For a contradiction, assume that $G$ contains no $S_4$. 
	Then, it implies that $\Delta(G)\leq2$, and so $\delta(\overline{G})\geq 8$. 
	Now, consider the graph $\overline{G}$. Let $v$ be a vertex in $\overline{G}$ and 
	let $A=\{v_1,v_2,\cdots,v_8\}\subseteq N_{\overline{G}}(v)$. 
	Every vertex in $A$ has at least $8$ neighbours di $\overline{G}$. 
	Since there are only $3$ vertices in $\overline{G}$ that are not in $A$, 
	then every vertex in $\overline{G}[A]$ have at least $5$ neighbours in $\overline{G}[A]$, 
	so $\Delta(\overline{G}[A])\geq5$. 
	In \cite{BON71}, Bondy showed that if a graph $G_1$ with $n_1$ vertices have $\delta(G_1)\geq n_1/2$, then $G_1$ contains a cycle of any order less or equal to $n_1$ or  $G_1=K_{n_1/2,n_1/2}$ for even $n_1$. 
	Therefore $\overline{G}[A]$ contains a $C_8$ or $\overline{G}[A]=K_{4,4}$. Since $K_{4,4}$ contains a $C_8$, 
	then $\overline{G}$ contains a $W_8$, a contradiction. Hence, $G$ contains a $S_{n-1}$ (for $n=5$).
	
	Let $U=\{u_0, u_1,u_2,\cdots,u_{n-2}\}$ be the set of vertices of $S_{n-1}$ in $G$ with $u_0$ as the center. 
	Let $W=\{w_1,w_2,\cdots,w_{n+2}\}$ be the set of all vertices in $V(G)-U$. 
	Since $G$ contains no $S_n(1,1)$, then $E(U,W)=\emptyset$.
	Now, consider the following two cases.
	
	{\bf Case 1.} $\delta(G[W])\leq n-3$.
	Let $\deg_{G[W]}(w_1)=\delta(G[W])\leq n-3$, since $|W|=n+2$ then $\deg_{\overline{G}[W]}(w_1)\geq 4$. Let $\{w_2,w_3,w_4,w_5\}\subseteq N_{\overline{G}[W]}(w_1)$, Then,
	the induced subgraph in $\overline{G}$ by $\{u_1$, $u_2$, $u_3$, $u_4$, $w_1$, $w_2$, $w_3$, $w_4$, $w_5\}$ will contain 
	a $W_8$ with $w_1$ as the hub, a contradiction.
	
	{\bf Case 2.}  $\delta(G[W])\geq n-2$.
	Let $N_G(w_1)=\{w_2,w_3,\cdots,w_{n-1}\}$. Since $n\geq5$ and $\delta(G[W])\geq n-2$, $w_n$ has at least $3$ neighbours in $G[W]$, then $w_n$ have at least a neighbour in $N_G(w_1)$ 
	which makes a $S_n(1,1)$ in $G[W]$, a contradiction.
\end{proof}

\begin{theorem} 
	\label{Th2}
	$R(S_n(1,1),W_8)=2n$ for even $n \geq 6$.
\end{theorem}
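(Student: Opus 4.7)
The plan is to establish both bounds of the equality $R(S_n(1,1),W_8)=2n$ for even $n\geq 6$: a direct construction for the lower bound, and a case analysis bootstrapped from Theorem~\ref{RSnW8odd} for the upper bound.

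For the lower bound $R(S_n(1,1),W_8)\geq 2n$, I propose the graph $G=\overline{H}\cup K_{n-1}$, where $H$ is a $2$-regular graph on $n$ vertices containing no $C_8$ as a subgraph. Such an $H$ exists for every even $n\geq 6$: for instance, $H=C_6$ if $n=6$, and more generally one takes a disjoint union of $C_4$'s with at most one $C_6$ to handle $n\equiv 2\pmod{4}$. Because $\Delta(\overline{H})=n-3$, the component $\overline{H}$ cannot contain $S_n(1,1)$, whose hub has degree $n-2$, and $K_{n-1}$ is too small, so $G$ contains no $S_n(1,1)$. In $\overline{G}=H+\overline{K_{n-1}}$, any prospective $W_8$-hub in $\overline{K_{n-1}}$ has $\overline{G}$-neighborhood exactly $H$, which has no $C_8$ by construction; any hub in $H$ sees only $2$ vertices of $H$ together with the independent set $\overline{K_{n-1}}$, and a $C_8$ in such a neighborhood would need at least $4$ vertices from $N_H(v)$, which is impossible.

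For the upper bound, let $G$ be a graph on $2n$ vertices with $S_n(1,1)\not\subseteq G$ and $W_8\not\subseteq\overline{G}$. Since $R(S_{n-1},W_8)=2n-1$ by Theorem~\ref{RSnW8odd} (because $n-1$ is odd and at least $5$), $G$ contains an $S_{n-1}$; write its vertices as $U=\{u_0,u_1,\ldots,u_{n-2}\}$ with $u_0$ the center, and set $W=V(G)\setminus U$ so $|W|=n+1$. Forbidding $S_n(1,1)$ forces $N(u_i)\subseteq U$ for each $i\geq 1$, since any neighbor outside $U$ would serve as the pendant on the leaf $u_i$. I split on whether $u_0$ has a neighbor in $W$. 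In \emph{Case~1} ($E(U,W)=\emptyset$), every $u_i$-$w_j$ pair is an edge of $\overline{G}$. If $\delta(G[W])\leq n-4$, a vertex $w_1$ of minimum degree has at least $4$ complement-neighbors $w_2,w_3,w_4,w_5\in W$, and the cycle $w_2 u_1 w_3 u_2 w_4 u_3 w_5 u_4 w_2$ sits in $\overline{G}$ inside the $\overline{G}$-neighborhood of $w_1$, producing a forbidden $W_8$. Otherwise $\delta(G[W])\geq n-3$; since $n$ is even, $(n+1)(n-3)$ is odd, which rules out $(n-3)$-regularity of $G[W]$, so some $h\in W$ has $\deg_{G[W]}(h)\geq n-2$. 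Picking $n-2$ neighbors $x_1,\ldots,x_{n-2}$ of $h$ leaves a pair $\{y_1,y_2\}=W\setminus\{h,x_1,\ldots,x_{n-2}\}$; if no $x_i y_j$ were an edge, then $y_1$ would have $\geq n-2$ nonneighbors in $G[W]$, contradicting $\delta(G[W])\geq n-3$ for $n\geq 6$, so some $x_i y_j\in E(G)$ yields $S_n(1,1)\subseteq G[W]$.

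In \emph{Case~2}, $u_0$ has some $w_1\in W$ as neighbor, hence $\deg_G(u_0)\geq n-1$, and I first show every $v\in N(u_0)$ satisfies $\deg_G(v)=1$: if $v$ had another neighbor $y\neq u_0$, then $u_0$ as hub, $v$ as subdivision, $y$ as pendant, and any $n-3$ vertices of $N(u_0)\setminus\{v,y\}$ (a set of size $\geq n-3$) as the remaining leaves would assemble $S_n(1,1)$. Therefore $G$ decomposes as the disjoint union of the star $K_{1,|N(u_0)|}$ and $G[W\setminus N(u_0)]$, and in $\overline{G}$ the pendant set $P=N(u_0)$ is a clique that is fully joined to $W\setminus N(u_0)$. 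A $W_8$ in $\overline{G}$ is produced either (i) when $|W\setminus N(u_0)|\geq 4$, by the alternating cycle $p_1 w_a p_2 w_b p_3 w_c p_4 w_d p_1$ with a fifth pendant as hub; or (ii) when $|W\setminus N(u_0)|\leq 3$, using $|P|\geq 2n-4\geq 8$ to find $C_8$ inside the clique $P$ and taking as hub any vertex of $W\setminus N(u_0)$ (or, if $W\setminus N(u_0)=\emptyset$, another pendant from $P$, which has $\geq 2n-1\geq 11$ vertices). The main obstacle I expect is Case~1 with $\delta(G[W])\geq n-3$: the parity observation that uses evenness of $n$ to force $\Delta(G[W])\geq n-2$ is the real hinge, without which $G[W]$ could be the $(n-3)$-regular obstruction that appears in the odd-$n$ lower bound of Theorem~\ref{Th1}.
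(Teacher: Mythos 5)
Your proof is correct and follows essentially the same route as the paper: the same lower-bound construction $\overline{H}\cup K_{n-1}$ with $H$ a $2$-regular $C_8$-free graph, and the same upper-bound argument via $R(S_{n-1},W_8)=2n-1$, the dichotomy on $\delta(G[W])$, and the parity obstruction to $(n-3)$-regularity. Your Case~2 (where $u_0$ gains a neighbour in $W$, so that $S_n\subseteq G$ but not automatically $S_n(1,1)\subseteq G$) is a welcome extra step: the paper simply asserts $E(U,W)=\emptyset$, which strictly speaking requires exactly the degree-one/clique-in-$\overline{G}$ argument you supply.
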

\begin{proof} 
	Consider the graph $G=\overline{2K_4}\cup K_{n-1}$ for $n=8$ and 
	$G=\overline{C}_n\cup K_{n-1}$ for $n\ne8$. Then, $G$ contains no $S_n(1,1)$ and its complement contains no $W_8$. 
	Hence, $R(S_n(1,1),W_8) \geq 2n$.
	
	Now, to show that $R(S_n(1,1),W_8) \leq 2n$ for even $n \geq 6$, consider any graph $G$ of order $2n$ containing no $S_n(1,1)$ and $W_8\not\subseteq\overline{G}$.
	
	By Theorem \ref{RSnW8odd}, we obtain that $R(S_{n-1},W_8)=2(n-1)+1=2n-1$ for even $n$. Since $\overline{G}$ contains no $W_8$, then $G$ contains a $S_{n-1}$. 
	Let $U=\{u_0, u_1,u_2,\cdots,u_{n-2}\}$ be the set of vertices of $S_{n-1}$ in $G$ with $u_0$ as the center.
	Let $W=\{w_1,w_2,\cdots,w_{n+1}\}$ be the set of all vertices in $V(G)-U$. 
	Since $G$ contains no $S_n(1,1)$, then $E(U,W)=\emptyset$.
	Now, consider the following cases.
	
	{\bf Case 1.} $\delta(G[W])\leq n-4$. 
	Let $\deg_{G[W]}(w_1)=\delta(G[W])\leq n-4$. since $|W|=n+1$, then $\deg_{\overline{G}[W]}(w_1)\geq 4$. Let $\{w_2,w_3,w_4,w_5\}\subseteq N_{\overline{G}[W]}(w_1)$, then 
	the induced subgraph in $\overline{G}$ by $\{u_1,u_2,u_3,u_4,w_1,w_2,$ $w_3,w_4,w_5\}$ will contain 
	a $W_8$ with the hub $w_1$, a contradiction.
	
	{\bf Case 2.} $\delta(G[W])\geq n-3$. 
	If $G[W]$ has a vertex of degree at least $n-2$, say $w_1$, and 
	let $N_G(w_1)=\{w_2,w_3,\cdots,w_{n-1}\}$. 
	Since $n\geq6$ and $\delta(G[W])\geq n-3$, $w_n$ has at least $3$ neighbors in $G[W]$. 
	Hence, one of neighbors of $w_n$ is in $N_G(w_1)$ and it induces a $S_n(1,1)$ in $G[W]$, a contradiction.
	Therefore, $G[W]$ must be $(n-3)$-regular. But, this is not possible since the order of $G[W]$ is odd, a contradiction.
\end{proof}

\begin{corollary}
	\label{Cr1}
	Let $G$ be a graph of order $2n$ containing no $S_n(1,1)$ with $n \geq 6$.
	If $\overline{G}$ contains no $W_8$ then $n$ is odd and $G=G_1 \cup G_2$
	with $G_1$ is a graph of order $n-1$ and $G_2$ is 
	a regular graph of degree $n-3$.
\end{corollary}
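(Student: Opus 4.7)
The plan is to deduce the parity statement directly from Theorem~\ref{Th2} and then to rerun the argument of that theorem, without its parity restriction, in order to read off the decomposition. First, if $n$ were even then Theorem~\ref{Th2} would give $R(S_n(1,1),W_8)=2n$, so no graph of order $2n$ could satisfy the corollary's two hypotheses simultaneously; hence $n$ must be odd. Assuming $n\geq 7$ odd, Theorem~\ref{RSnW8even} gives $R(S_{n-1},W_8)=2n$, so the assumption $W_8\not\subseteq\overline{G}$ forces $S_{n-1}\subseteq G$. Write $U=\{u_0,u_1,\dots,u_{n-2}\}$ for the vertex set of this $S_{n-1}$ (with $u_0$ the center) and $W=V(G)\setminus U$, so $|W|=n+1$. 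The same $S_n(1,1)$-avoidance argument as in Theorem~\ref{Th2} yields $E(U,W)=\emptyset$; setting $G_1:=G[U]$ then gives $G=G_1\cup G[W]$, with $G_1$ of order $n-1$.

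To show that $G_2:=G[W]$ is $(n-3)$-regular I would rerun the two cases of Theorem~\ref{Th2}. If $\delta(G[W])\leq n-4$, a minimum-degree $w_1\in W$ has four $\overline{G}$-neighbors $w_2,\dots,w_5$ in $W$; since $E(U,W)=\emptyset$, the nine vertices $\{u_1,\dots,u_4,w_1,\dots,w_5\}$ induce in $\overline{G}$ a $W_8$ hubbed at $w_1$ (its rim being a Hamilton $C_8$ in the complete bipartite graph between $\{u_1,\dots,u_4\}$ and $\{w_2,\dots,w_5\}$), contradicting the hypothesis; hence $\delta(G[W])\geq n-3$. Conversely, if some $w_1\in W$ had $\deg_{G[W]}(w_1)\geq n-2$, then any $w_n\in W\setminus(N_G(w_1)\cup\{w_1\})$ would, using $\delta(G[W])\geq n-3$ and $n\geq 7$, necessarily share a neighbor with $w_1$, producing an $S_n(1,1)\subseteq G[W]$, again a contradiction. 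Thus every vertex of $G[W]$ has degree exactly $n-3$, as required; and as a sanity check the parity conclusion is consistent, since an $(n-3)$-regular graph on $n+1$ vertices can only exist when $(n-3)(n+1)$ is even, which already rules out even $n$.

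I expect the main obstacle to be exactly what it was in Theorem~\ref{Th2}: verifying $E(U,W)=\emptyset$. Edges from a \emph{leaf} of the $S_{n-1}$ to a vertex of $W$ immediately create an $S_n(1,1)$ (center $u_0$, subdivided pendant via the leaf to $W$), but an edge from the center $u_0$ to $W$ must be handled by re-rooting the $S_{n-1}$ (swapping the new $W$-vertex into $U$ in place of some leaf) so that the leaf case applies. Once this step is in place, the remainder of the argument is a direct rerun of the contradiction-free portions of Theorem~\ref{Th2}.
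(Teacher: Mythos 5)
Your proposal follows the paper's own route for this corollary exactly: parity from Theorem~\ref{Th2}, the star $S_{n-1}$ from Theorem~\ref{RSnW8even}, and then a rerun of the two cases in the proof of Theorem~\ref{Th2} to obtain $E(U,W)=\emptyset$ and the $(n-3)$-regularity of $G[W]$. Indeed, the paper's proof of the corollary is literally this one-line reduction, so the bulk of your argument is fine.

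The one place where you go beyond the paper is the step $E(U,W)=\emptyset$, and the patch you propose there does not work. You are right that an edge $u_0w$ from the center of the $S_{n-1}$ to $W$ does not by itself create an $S_n(1,1)$ (it only creates an $S_n$, which does not contain $S_n(1,1)$). But re-rooting by swapping $w$ into $U$ in place of a leaf $u_i$ merely reproduces the same configuration: $u_i$ lands in the new $W$ while remaining adjacent to the center $u_0$, so you again face a center-to-$W$ edge and never reduce to the leaf case. The case should instead be dispatched as follows. If $u_0w\in E(G)$ then $\deg(u_0)\geq n-1$; if some neighbor $x$ of $u_0$ had a second neighbor $y$, then $u_0$ together with $n-3$ of its neighbors outside $\{x,y\}$ and the path $u_0xy$ would form an $S_n(1,1)$, so every neighbor of $u_0$ has degree $1$ in $G$. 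In particular $u_1,\dots,u_5$ are adjacent in $\overline{G}$ to every vertex except $u_0$, so taking any $w_1,\dots,w_4\in W$, the vertex $u_1$ as hub and the rim $u_2w_1u_3w_2u_4w_3u_5w_4u_2$ give a $W_8$ in $\overline{G}$, a contradiction (here $n\geq 7$ and $|W|=n+1\geq 8$). To be fair, the paper's proof of Theorem~\ref{Th2} asserts $E(U,W)=\emptyset$ without comment either, so you were right to single this step out even though your specific fix needs to be replaced.
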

\begin{proof}
	Let $G$ be a graph of order $2n$ satisfying the above assumption. 
	By Theorem \ref{Th2}, $n$ must be odd. 
	Theorem \ref{RSnW8even} states that $R(S_{m-1},W_8)=2m$ for odd $m$. 
	Since $\overline{G}$ contains no $W_8$ and $|V(G)|=2n$ then $G$ must contain $S_{n-1}$ by Theorem \ref{RSnW8even}.
	Now, by a similar argument as in the proof of Theorem \ref{Th2}, we obtain 
	$G=G_1 \cup G_2$
	with $G_1$ is a graph of order $n-1$ and $G_2$ is 
	a regular graph of degree $n-3$. 
\end{proof}

\begin{theorem}
	\label{Th3}
	$R(S_n(1,2),W_8)=R(S_n(2,1),W_8)=R(S_n(3),W_8)=2n$ for even $n \geq 8$.
\end{theorem}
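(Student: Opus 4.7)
The plan is to prove both bounds, treating the three trees simultaneously thanks to their common maximum-degree condition $\Delta \geq n-3$. For the upper bound $R \leq 2n$, let $G$ be any graph of order $2n$ whose complement contains no $W_8$. Since $n$ is even and at least $6$, Theorem \ref{Th2} forces $G$ to contain $S_n(1,1)$. The hypotheses of Lemma \ref{Lm2}, namely $|V(G)|=2n$, $n\geq 8$, $S_n(1,1)\subseteq G$, and $W_8\not\subseteq \overline{G}$, are then met exactly, so $G$ contains each of $S_n(1,2)$, $S_n(2,1)$, and $S_n(3)$. This disposes of all three Ramsey numbers uniformly.

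For the lower bound $R \geq 2n$, I would exhibit a single $(T,W_8,2n-1)$-good graph that simultaneously avoids all three trees. Following the template of Theorems \ref{Th1} and \ref{Th2}, set $G = \overline{H}\cup K_{n-1}$, where $H$ is a $3$-regular graph of order $n$ whose components each have at most $6$ vertices. Concretely, take $H=(n/4)K_4$ when $n\equiv 0\ (\text{mod}\ 4)$, and $H=K_{3,3}\cup((n-6)/4)K_4$ when $n\equiv 2\ (\text{mod}\ 4)$. Since $\overline{H}$ is $(n-4)$-regular, no vertex of $G$ inside the $\overline{H}$-component attains degree $n-3$, while $K_{n-1}$ has only $n-1$ vertices; hence no $n$-vertex tree possessing a vertex of degree at least $n-3$ embeds into $G$, so $G$ contains none of $S_n(1,2)$, $S_n(2,1)$, $S_n(3)$.

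The main computational step is to verify that $\overline{G} = H + \overline{K_{n-1}}$ (the join) contains no $W_8$. I would split cases by the location of a putative hub. If the hub $h$ lies in $V(H)$, its neighborhood in $\overline{G}$ is $N_H(h)\cup V(\overline{K_{n-1}})$ with $|N_H(h)|=3$; since the $n-1$ vertices of $\overline{K_{n-1}}$ form an independent set in $\overline{G}$, any $C_8$ in this neighborhood can contain at most $4$ of them (the largest independent set in $C_8$ has size $4$), hence at least $4$ from $N_H(h)$, contradicting $|N_H(h)|=3$. If instead the hub lies in $\overline{K_{n-1}}$, the sought $C_8$ must sit inside $H$, impossible since every component of $H$ has at most $6<8$ vertices. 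The main obstacle here is organisational rather than deep: the upper bound is an almost direct invocation of Theorem \ref{Th2} followed by Lemma \ref{Lm2}, and the only real subtlety on the construction side is carrying two cases for $n\bmod 4$, both of which pass the $W_8$-freeness test for the same structural reasons (3-regularity of $H$ and small component size).
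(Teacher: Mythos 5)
Your proposal is correct and follows essentially the same route as the paper: the identical extremal graph $\overline{(\frac{n}{4})K_4}\cup K_{n-1}$ (resp. $\overline{K_{3,3}\cup(\frac{n-6}{4})K_4}\cup K_{n-1}$) for the lower bound, and Theorem \ref{Th2} followed by Lemma \ref{Lm2} for the upper bound. The only difference is that you spell out the $W_8$-freeness verification (via the independence of $\overline{K_{n-1}}$ in $\overline{G}$ and the small components of $H$), which the paper states without proof.
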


\begin{proof}
	Consider $G=\overline{(\frac{n}{4})K_4}\cup K_{n-1}$ for $n\equiv 0 \;(\text{mod}\ 4)$ and $G=\overline{K_{3,3}\cup(\frac{n-6}{4})K_4}\cup K_{n-1}$ for $n\equiv 2(\text{mod}\ 4)$. Then, $G$ contains no tree $T$ with $\Delta(T)\geq n-3$ and its complement contains no $W_8$.  Hence, we have $R(S_n(1,2),W_8)\geq2n$, $R(S_n(2,1),W_8)\geq2n$, and $R(S_n(3),W_8)\geq2n$ for even $n$.
	
	Now, for even $n\geq8$, let $G$ be a graph of order $2n$ and assume $\overline{G}$ contains no $W_8$. By Theorem \ref{Th2}, $G$ contains $S_n(1,1)$. By Lemma \ref{Lm2}, $G$ contains $S_n(1,2)$, $S_n(2,1)$, and $S_n(3)$. Hence $R(S_n(1,2),W_8)=R(S_n(2,1),W_8)=R(S_n(3),W_8)=2n$ for even $n \geq 8$.
\end{proof}

\begin{theorem}
	\label{Th4}
	$R(S_n(1,2),W_8)=2n+1$ for $n \geq 11$ and $n\equiv 3\;(\text{mod}\ 4)$.
\end{theorem}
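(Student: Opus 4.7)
The plan is to prove the equality in two parts. For the lower bound $R(S_n(1,2),W_8)\geq 2n+1$, I would reuse the graph from Theorem \ref{Th1}: since $n\equiv 3\;(\text{mod}\ 4)$, the integer $(n+1)/4$ is at least $3$, and $G=\overline{\left(\frac{n+1}{4}\right)K_4}\cup K_{n-1}$ is a candidate $(S_n(1,2),W_8,2n)$-good graph. Any $S_n(1,2)$ in $G$ must lie inside the $(n+1)$-vertex component $\overline{\left(\frac{n+1}{4}\right)K_4}$, since the other component has only $n-1<n$ vertices. This component is the complete multipartite graph with parts $A_1,\ldots,A_{(n+1)/4}$ of size $4$, and every vertex has degree exactly $n-3$, so fixing a hub $h$ of $S_n(1,2)$ in some $A_i$ forces the $n-3$ tree-neighbours of $h$ to be precisely $V\setminus A_i$; the vertices $w_2$ and $v$ on the subdivided branch must then both lie in $A_i$, where they are non-adjacent in $G$ -- a contradiction. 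For the complement, a hub of $W_8$ in the $K_{n-1}$-side forces the $C_8$ to sit inside $\left(\frac{n+1}{4}\right)K_4$, whose components are $K_4$; a hub inside a $K_4$-component has its neighbourhood inducing the join $K_3+\overline{K_{n-1}}$, which cannot contain $C_8$ because avoiding consecutive $\overline{K_{n-1}}$-vertices would require at least $4$ vertices from the $K_3$ side.

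For the upper bound, I would take any $G$ of order $2n+1$ with $W_8\not\subseteq\overline{G}$. Since $n$ is odd, Theorem \ref{Th1} gives $R(S_n(1,1),W_8)=2n+1$, so $G$ contains $S_n(1,1)$; label its vertices $\{u_0,\ldots,u_{n-1}\}$ with hub $u_0$ and pendant path $u_0u_1u_{n-1}$, and set $U=\{u_2,\ldots,u_{n-2}\}$ (of size $n-3$) and $W=V(G)\setminus\{u_0,\ldots,u_{n-1}\}$ (of size $n+1$). Assuming for contradiction that $S_n(1,2)\not\subseteq G$, I would rerun the $S_n(1,2)$-paragraph of the proof of Lemma \ref{Lm2} in this slightly bigger window: the assumption forces $N_G(u_{n-1})\subseteq\{u_0,u_1\}$ and $|N_U(w)|\leq 1$ for every $w\in W$, since otherwise $u_0u_1u_{n-1}x$ or $u_0u_iwu_j$ would embed $S_n(1,2)$ with hub $u_0$. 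Splitting on whether some $u\in U$ has at least three neighbours in $W$: in the positive case the Lemma \ref{Lm2} configuration yields $W_8\subseteq\overline{G}$ with hub $u_{n-1}$; in the negative case every bipartite subgraph of $\overline{G}$ on four $u$'s and four $w$'s has minimum degrees $\geq 2$ on the $U$-side and $\geq 3$ on the $W$-side, so by the Moon--Moser bipartite Hamilton criterion ($\deg(x)+\deg(y)\geq 5$ for parts of size $4$) it contains a Hamiltonian $C_8$, which together with $u_{n-1}$ produces the forbidden $W_8\subseteq\overline{G}$.

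I expect the main obstacle to be purely bookkeeping: the proof of Lemma \ref{Lm2} was written for $|W|=n$, and all of its local computations need to be verified with $|W|=n+1$. Since the larger $W$ only adds flexibility in both the $W_8$-search and the $S_n(1,2)$-embedding, no step is genuinely harder, but each constant (the four $u$'s, the four $w$'s, the degree bounds, and the Moon--Moser inequality) has to be re-checked. The more subtle point is the lower-bound verification that $\overline{\left(\frac{n+1}{4}\right)K_4}$ admits no $S_n(1,2)$: it is tight, depending on the identity $|V\setminus A_i|=n-3$ that holds precisely in the residue class $n\equiv 3\;(\text{mod}\ 4)$, and this is exactly what keeps $R(S_n(1,2),W_8)$ strictly above $2n$ in this case but not in the other residues.
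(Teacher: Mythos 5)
Your proof is correct and follows essentially the same route as the paper: the identical lower-bound construction $\overline{(\frac{n+1}{4})K_4}\cup K_{n-1}$, and for the upper bound an application of Theorem \ref{Th1} to obtain $S_n(1,1)\subseteq G$ followed by the $S_n(1,2)$-extraction argument of Lemma \ref{Lm2}. The only difference is that the paper cites Lemma \ref{Lm2} directly (though it is stated for order $2n$ and applied to a $(2n+1)$-vertex graph), whereas you re-verify its $S_n(1,2)$ paragraph with $|W|=n+1$; both amount to the same argument.
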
 

\begin{proof}
	If $G=\overline{(\frac{n+1}{4})K_4}\cup K_{n-1},$ then $G$ contains no $S_n(1,2)$ and its complement contains no $W_8$. Hence, we have $R(S_n(1,2),W_8)\geq2n+1$ for $n\equiv 3 \;(\text{mod}\ 4)$. Now, for any $n\geq7$ and $n\equiv 3 \;(\text{mod}\ 4)$, 
	let $G$ be a graph of order $2n+1$ and assume $\overline{G}$ contains no $W_8$. 
	By Theorem \ref{Th1}, $G$ contains a $S_n(1,1)$. By
	Lemma 2, $G$ contains $S_n(1,2)$. Hence, $R(S_n(1,2),W_8)=2n+1$ for $n \geq 7$ and $n\equiv 3 \;(\text{mod}\ 4)$.
\end{proof}

\begin{theorem}
	\label{Th5}
	$R(S_n(1,2),W_8)=2n$ for $n \geq 9$ and $n\equiv 1 \;(\text{mod}\ 4)$.
\end{theorem}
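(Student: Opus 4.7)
The plan is to establish $R(S_n(1,2), W_8) = 2n$ by proving both bounds separately. For the lower bound I would exhibit a $(S_n(1,2), W_8)$-good graph on $2n-1$ vertices by following the template of Theorems~\ref{Th1}--\ref{Th4}: set $G = \overline{H} \cup K_{n-1}$ for a carefully chosen $H$ on $n$ vertices, so that $\overline{G}$ consists of a copy of $H$ on the $\overline{H}$-side, an independent set $B$ of size $n-1$ on the $K_{n-1}$-side, and all edges between the two sides. To prevent $W_8 \subseteq \overline{G}$, one needs $\Delta(H) \leq 3$ (else four $H$-neighbors of a vertex $v$ together with four vertices of $B$ alternate into a $C_8$ around the hub $v$) and $H$ must be $C_8$-free (to block a hub in $B$, whose $\overline{G}$-neighborhood induces $H$). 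To prevent $S_n(1,2) \subseteq \overline{H}$ one prefers large $\delta(H)$. For even $n$ these constraints force a $3$-regular $H$, which is impossible for odd $n$, so I would use the hybrid $H = C_3 \cup K_{3,3} \cup \frac{n-9}{4}K_4$. Only the three $C_3$-vertices achieve $\overline{H}$-degree $n-3$ and could center an $S_n(1,2)$; for any such center $v$, a leaf-count argument shows the two inner path vertices $b,c$ of the subdivided path must lie outside $N_{\overline{H}}(v)$, hence must be the other two $C_3$-vertices, but those are mutually $H$-adjacent and thus not $\overline{H}$-adjacent, killing the path.

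For the upper bound, I would take a graph $G$ of order $2n$ with no $S_n(1,2)$ and no $W_8 \subseteq \overline{G}$. If $S_n(1,1) \subseteq G$, Lemma~\ref{Lm2} immediately gives $S_n(1,2) \subseteq G$, a contradiction. Otherwise Corollary~\ref{Cr1} applies, so $n$ is odd and $G = G_1 \cup G_2$ with $|G_1| = n-1$ and $G_2$ an $(n-3)$-regular graph on $n+1$ vertices. Fixing any $v \in V(G_2)$ and letting $X_v$ be its three non-neighbors in $G_2$, if $X_v$ contained a $G_2$-edge $bc$, then the estimate $|N_{G_2}(v) \cap N_{G_2}(b)| \geq 2(n-3) - (n+1) = n-7 \geq 2$ (valid for $n \geq 9$) would supply a common neighbor $a$, and then $v, a, b, c$ together with the $n-4$ remaining elements of $N_{G_2}(v) \setminus \{a\}$ would realize $S_n(1,2) \subseteq G_2$, a contradiction. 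Thus $X_v$ is $G_2$-independent, i.e.\ forms a $K_3$ in $\overline{G_2}$; combined with the $3$-regularity of $\overline{G_2}$ this forces $\{v\} \cup X_v$ to be a $K_4$-component of $\overline{G_2}$. Hence $\overline{G_2}$ is a disjoint union of $K_4$'s, so $4 \mid n+1$, contradicting $n \equiv 1 \;(\text{mod}\ 4)$.

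The main obstacle is the lower bound: the parity obstruction kills the symmetric $3$-regular $H$ that works for even $n$, so one has to introduce a controlled asymmetry (three degree-$2$ vertices arranged as a triangle) and then verify that the resulting three vertices of $\overline{H}$-degree $n-3$ are still blocked from centering $S_n(1,2)$ precisely because they are pairwise non-adjacent in $\overline{H}$. Once $H$ is chosen, the $S_n(1,2)$- and $W_8$-freeness checks are short case analyses (and $C_8$-freeness of $H$ is immediate since every component has order at most $6$), and the upper bound proceeds mechanically via Lemma~\ref{Lm2}, Corollary~\ref{Cr1}, and the mod-$4$ arithmetic contradiction.
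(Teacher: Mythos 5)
Your proposal is correct and follows essentially the same route as the paper: the lower bound comes from a good graph $\overline{H}\cup K_{n-1}$ with $H$ a union of small components in which the degree-$(n-3)$ vertices of $\overline{H}$ form a triangle of $H$ (the paper takes $H=3C_3\cup\frac{n-9}{4}K_4$ rather than your $C_3\cup K_{3,3}\cup\frac{n-9}{4}K_4$, but the blocking argument is identical), and the upper bound uses Lemma \ref{Lm2}, Corollary \ref{Cr1}, and the observation that each vertex of the $(n-3)$-regular part together with its three non-neighbours must induce $\overline{K_4}$, forcing $4\mid n+1$ and contradicting $n\equiv 1\;(\text{mod}\ 4)$. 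No gaps; the common-neighbour count $n-7\geq 2$ you supply is exactly what the paper's ``because $n\geq 9$'' is standing in for.
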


\begin{proof}
	Let $G=\overline{3C_3\cup (\frac{n-9}{4})K_4}\cup K_{n-1}.$ Then, $G$ contains no $S_n(1,2)$ and its complement has no $W_8$. Hence, we have $R(S_n(1,2),W_8)\geq2n$ 
	for $n\equiv 1 \;(\text{mod}\ 4)$. Now, for any $n\geq9$ and $n\equiv 1 \;(\text{mod}\ 4)$, 
	assume for a contradiction $G$ as a graph of order $2n$ containing no $S_n(1,2)$ and $\overline{G}$ contains no $W_8$.
	
	If $G$ contains $S_n(1,1)$, then by Lemma \ref{Lm2}, $G$ contains a $S_n(1,2)$, a contradiction. Therefore $G$ contains no $S_n(1,1)$. By Corollary \ref{Cr1}, we have $G= G_1 \cup G_2$, 
	where $G_1$ is a graph of order $n-1$ and $G_2$ is a $(n-3)$-regular graph of order $n+1$. 
	Let $V(W)=\{w_1,w_2,\cdots,w_{n+1}\}$ be the set of vertices of $G_2$. 
	Let $N(w_1)=\{w_2,w_3,\cdots,w_{n-2}\}$ and $w_{n-1},w_n,w_{n+1}\notin N(w_1)$. For any $i \in \{n-1,n,n+1\}$, $N(w_i)\cap N(w_1)\ne \emptyset$ because $n\geq9$. 
	If $(w_i,w_j) \in E(G)$ for any $i,j \in \{n-1,n,n+1\}$ then $G[W]$ contains a $S_n(1,2)$. Therefore, the subgraph induced by $\{w_1,w_{n-1},w_n,w_{n+1}\}$ is isomorphic to 
	$\overline{K_4}$. 
	That means $|W|$ must be a multiple of 4. But, this is not possible since 
	$|W|=n+1\equiv 2(\text{mod}\ 4)$.
\end{proof}

\begin{theorem}
	\label{Th6}
	$R(S_n(2,1),W_8)=R(S_n(3),W_8)=2n-1$ for odd $n \geq 9$.
\end{theorem}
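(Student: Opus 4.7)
The lower bounds $R(S_n(2,1),W_8),R(S_n(3),W_8)\ge 2n-1$ come from the Chv\'{a}tal--Harary construction: the graph $2K_{n-1}$ has order $2n-2$ with every component of size $n-1<n$, so it contains no tree of order $n$, while its complement $K_{n-1,n-1}$ is bipartite and therefore has no odd cycle, in particular no $W_8$.

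For the upper bound, let $G$ be any graph of order $2n-1$ with $\overline{G}$ containing no $W_8$. Since $n-1$ is even and $\ge 6$, Theorem~\ref{Th2} gives $R(S_{n-1}(1,1),W_8)=2n-2<2n-1$, so $G$ contains an $S_{n-1}(1,1)$. Label its vertices so that $u_0$ is the center, $u_1,\ldots,u_{n-4}$ are the direct leaves of $u_0$, $u_m$ is the middle vertex adjacent to $u_0$, and $u_\ell$ is the leaf adjacent to $u_m$. Put $W:=V(G)\setminus V(S_{n-1}(1,1))$, so $|W|=n$.

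To prove $S_n(2,1)\subseteq G$, suppose the contrary. For any $j\in\{1,\ldots,n-4\}$ and any $w\in W$ with $u_jw\in E(G)$, the set $\{u_0,u_1,\ldots,u_{n-4},u_m,u_\ell,w\}$ spans $S_n(2,1)$ in $G$ with center $u_0$, middles $u_m,u_j$, and extras $u_\ell,w$; hence $E(\{u_1,\ldots,u_{n-4}\},W)=\emptyset$. Now analyze $G[W]$ (of order $n$). If $\delta(G[W])\le n-5$, choose $w_1\in W$ attaining this minimum; then $\deg_{\overline{G}[W]}(w_1)\ge 4$, so pick $w_2,w_3,w_4,w_5\in N_{\overline{G}[W]}(w_1)$ and any four distinct $a_1,a_2,a_3,a_4\in\{u_1,\ldots,u_{n-4}\}$ (possible since $n-4\ge 5$). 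The induced subgraph of $\overline{G}$ on $\{w_1,w_2,w_3,w_4,w_5,a_1,a_2,a_3,a_4\}$ contains a $W_8$ with hub $w_1$ and $8$-cycle $a_1w_2a_2w_3a_3w_4a_4w_5a_1$, since every cycle edge is a $u$--$w$ pair and therefore lies in $\overline{G}$ by $E(\{u_j\},W)=\emptyset$; this contradicts the assumption on $\overline{G}$. If $\delta(G[W])\ge n-4$ and $\Delta(G[W])\ge n-3$, Lemma~\ref{Lm3} applied to $G[W]$ gives $S_n(2,1)\subseteq G[W]\subseteq G$, a contradiction. Otherwise $G[W]$ is $(n-4)$-regular on the odd number $n$ of vertices, so $2|E(G[W])|=n(n-4)$ would be odd, impossible.

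For $S_n(3)\subseteq G$, suppose the contrary and use the same embedding. The configuration $C=u_0,c=u_m$ forces $u_m$ to have no neighbor in $W$: a $W$-neighbor $w$ would make $\{u_0,u_1,\ldots,u_{n-4},u_m,u_\ell,w\}$ span $S_n(3)$ with extras $u_\ell,w$ of $u_m$. The configurations $C=u_0,c=u_j$ force each $u_j$ to have at most one neighbor in $W\cup\{u_\ell\}$: two distinct such neighbors would serve as the two extras of $u_j$, with $\{u_1,\ldots,u_{n-4},u_m\}\setminus\{u_j\}$ as leaves of $u_0$. Consequently $|E(\{u_1,\ldots,u_{n-4},u_m\},W)|\le n-4$, so at least $n-(n-4)=4$ vertices of $W$ have no neighbor in $\{u_1,\ldots,u_{n-4},u_m\}$. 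The subcases $\delta(G[W])\ge n-4$ carry over verbatim (Lemma~\ref{Lm3}, respectively the parity obstruction to $(n-4)$-regularity on $n$ odd vertices). The main obstacle is the subcase $\delta(G[W])\le n-5$: because individual $u_j$'s may now carry a single stray edge to $W$, the alternating $C_8$ of $\overline{G}$ must be assembled carefully---first pick the hub $w_1$ from the $\ge 4$ ``free'' $W$-vertices, then pick $w_2,\ldots,w_5\in N_{\overline{G}[W]}(w_1)$ and four $u$-side vertices from $\{u_1,\ldots,u_{n-4},u_m\}$ so as to avoid the at most $n-4$ stray $u_j$--$W$ edges; it is precisely the ``at most one $W\cup\{u_\ell\}$-neighbor per $u_j$'' bound that makes this selection feasible.
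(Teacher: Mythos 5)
Your lower bound and your proof that $G$ contains $S_n(2,1)$ are correct, and they follow a genuinely different route from the paper: you embed an $S_{n-1}(1,1)$ via Theorem~\ref{Th2} (using that $n-1$ is even), observe that a single leaf-to-$W$ edge would already complete an $S_n(2,1)$, and then finish on $G[W]$ alone by the trichotomy $\delta\le n-5$ (explicit $W_8$), $\Delta\ge n-3$ (Lemma~\ref{Lm3}), or $(n-4)$-regular (parity). This is cleaner than the paper's argument, which only extracts an $S_{n-2}$ from Theorem~\ref{RSnW8odd} and must then split into $\Delta(G)\ge n-2$ versus $\Delta(G)=n-3$ with several sub-subcases.

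The $S_n(3)$ half, however, has a genuine gap exactly where you flag ``the main obstacle,'' and it is not a routine detail. Your structural constraints are per~$u_j$ (each $u_j$ has at most one neighbour in $W\cup\{u_\ell\}$), not per~$w$, so a single vertex of $W$ may absorb all $n-4$ stray edges. Two concrete failures of your selection rule in the subcase $\delta(G[W])\le n-5$: (i) you take the hub $w_1$ among the $\ge 4$ ``free'' vertices of $W$ and then need $w_2,\dots,w_5\in N_{\overline{G}[W]}(w_1)$, but only a minimum-degree vertex of $G[W]$ is guaranteed to have four non-neighbours inside $G[W]$, and that vertex need not be free; (ii) even with a legitimate hub, if all stray edges land on one chosen vertex --- say $w_2$ is adjacent in $G$ to every $u_1,\dots,u_{n-4}$, which your bounds permit --- then the two cycle positions flanking $w_2$ in $a_1w_2a_2w_3a_3w_4a_4w_5a_1$ can each be filled only by $u_m$, so the alternating $C_8$ in $\overline{G}$ cannot be assembled. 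To close the gap you must either dispose of such concentrated configurations separately (a $W$-vertex with many $U$-neighbours is itself a candidate centre for an $S_n(3)$) or introduce a finer case split on $a=\max_i|N_U(w_i)|$, which is exactly what the paper's Subcase~2.2(a)--(c) does.
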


\begin{proof}
	Consider $G=2K_{n-1}$. Then, $G$ contains no tree of order $n$ and its complement contains no $W_8$. Hence we have, $R(S_n(2,1),W_8)\geq 2n-1$ and $R(S_n(3),W_8)\geq 2n-1$. Now for any odd integer $n\geq9$, let $G$ be a graph of order $2n-1$ and assume 
	$\overline{G}$ contains no $W_8$. We will prove that $G$ contains a $S_n(2,1)$ and a $S_n(3)$. From Theorem \ref{RSnW8odd}, $G$ contains a $S_{n-2}$. Now, consider the following cases.
	
	{\bf Case 1.} $\Delta(G)\geq n-2$.
	Let $u_0$ be a vertex with degree $\Delta(G)\geq n-2$. Let  $U=\{u_1,u_2,\cdots,u_{n-2}\}\subseteq N(u_0)$, and $W=V(G)-(U\ \cup\{u_0\})=\{w_1,w_2,\cdots,w_n\}$.
	
	\textit{Subcase 1.1.} $E(U,W)=\emptyset$.
	If $\delta(G[W])\leq n-5$, then $\Delta(\overline{G}[W])\geq 4$. Let $w_1$ be a vertex with degree $\delta(G[W])\leq n-5$ and $w_2,w_3,w_4,w_5\notin N(w_1)$. Then, 
	there will be a $W_8$ in $\overline{G}$ with $w_1$ as its hub and the vertices
	$u_1,u_2,u_3,u_4,w_2,w_3,w_4,$ and $w_5$ as its cycle, a contradiction.
	
	Therefore $\delta(G[W])\geq n-4$. However, $G[W]$ cannot be $(n-4)$-regular since $n$ is odd. Therefore, there is a vertex of degree at least $n-3$ in $G[W]$. 
	Therefore $G[W]$ contains a $S_n(3)$ and $S_n(2,1)$ by Lemma \ref{Lm3}.
	
	\textit{Subcase 1.2.} $E(U,W)\ne\emptyset$. Let $u_1w_1\in E(G)$, $U'=U-\{u_1\}$, and $W'=W-\{w_1\}$.
	First, assume $G$ contains no $S_n(2,1)$. Since $G$ contains no $S_n(2,1)$, then $E(G[U'])=\emptyset$ and $E(U',W')=\emptyset$. If $\delta(G[W])\geq n-3$, then $G[W]$ contains a $S_n(2,1)$ by Lemma \ref{Lm1}, so $\delta(G[W])\leq n-4$. Now, consider the following subcases: \\
	(a) If $|N_U(w_1)|\geq3$, then $N(u_1)=\{u_0,w_1\}$, since otherwise $G$ contains a $S_n(2,1)$. Since $E(U',W')=\emptyset$, then we will have a $W_8$ in $\overline{G}$ with
	the hub $u_1$ and the cycle formed by the vertices $u_2,u_3,u_4,u_5,w_2,w_3, 
	w_4,w_5$, a contradiction;\\
	(b) If $|N_U(w_1)|=2$, let $N_U(w_1)=\{u_1,u_{n-2}\}$. Then $N(u_1)\subseteq\{u_0,u_{n-2},w_1\}$, since otherwise $G$ contains a $S_n(2,1)$. 
	Since $E(U',W')=\emptyset$, then $\overline{G}$ will contains a $W_8$ with $w_1$ as the hub and $u_2,u_3,u_4,u_5,w_2,w_3,w_4,w_5$ forms its cycle, a contradiction;\\
	(c) If $N_U(w_1)=\{u_1\}$. Let $w$ be a vertex in $G[W]$ with degree $\delta(G[W])\leq n-4$ and $w^1,w^2,w^3\notin N_{G[W]}(w)$. Then, again in $\overline{G}$ we will have a $W_8$ formed by the vertices $w,w^1,w^2,w^3,u_2,u_3,u_4,u_5,u_6$ with $w$ as the hub, a contradiction. Therefore, in any case $G$ will contain a $S_n(2,1)$.
	
	Next, assume $G$ contains no $S_n(3)$, then $N(u_1)\subseteq\{u_0,w_1\}$ and $|N_W(u_i)|\leq 1$ for $i=2,3,4,\cdots,n-2$. Since $E(U',W')\leq n-3$ and $|W|=n\geq9$, there exist four vertices $w_2,w_3,w_4,w_5$ such that $N_U(w_i)\leq1$. 
	This implies that  
	$\overline{G}$ contains a $W_8$ fomed by $u_1,u_2,u_3,u_4,u_5,w_2,w_3,w_4,w_5$ with $u_1$ as the hub, a contradiction. Therefore, $G$ must contain a $S_n(3)$.\\
	
	{\bf Case 2.} $\Delta(G)=n-3$.
	Let $u_0$ be a vertex with degree $n-3$. Let  $U=N(u_0)=\{u_1,u_2,\cdots,u_{n-3}\}$, and $W=V(G)-(U\ \cup\{u_0\})=\{w_1,w_2,\cdots,w_{n+1}\}$.
	
	\textit{Subcase 2.1.} $E(U,W)=\emptyset$.
	If $\delta(G[W])\leq n-4$, then $\Delta(\overline{G}[W])\geq 4$. 
	Let $w_1$ be a vertex with degree $\Delta(\overline{G}[W])$ and $w_2,w_3,w_4,w_5\in N_{\overline{G}[W]}(w_1)$. 
	Since $E(U,W)=\emptyset$, then the vertices 
	$u_1,u_2,u_3,u_4,w_1,w_2,w_3,w_4,w_5$ will form a $W_8$ in $\overline{G}$ with
	the hub $w_1$, a contradiction. 
	Therefore $\delta(G[W])\geq n-3$. Let $w_2,w_3,\cdots w_{n-2}\in N(w_1)$, 
	then $G[W-\{w_1\}]$ is a graph of order $n$ with $\delta(G[W-\{w_1\}])\geq n-4$. The graph
	$G[W-\{w_1\}]$ cannot be $(n-4)$-regular since $n$ is odd. Therefore, 
	$\Delta(G[W-\{w_1\}]) = n-3$. By Lemma \ref{Lm3}, $G[W-\{w_1\}]$ contains a $S_n(2,1)$ and a $S_n(3)$.
	
	\textit{Subcase 2.2.} $E(U,W)\ne\emptyset$.
	Let $u_1w_1\in E(G)$, $U'=U-\{u_1\}$, and $W'=W-\{w_1\}$.
	If $\delta(G[W'])\geq n-4$ then $G[W']$ cannot be $(n-4)$-regular since $n$ is odd.
	Therefore, $\Delta(G[W'])= n-3$. But, by Lemma \ref{Lm3} $G[W']$ contains a $S_n(2,1)$ and a $S_n(3)$. Thus, $\delta(G[W'])\leq n-5.$
	
	Now, we will prove $G$ contains $S_n(2,1)$. 
	Assume to the contrary, $G$ contains no $S_n(2,1)$. Since $E(U,W)\ne\emptyset$, let $u_1w_1\in E(G)$, $U'=U-\{u_1\}$, and $W'=W-\{w_1\}$. Since $G$ contains no $S_n(2,1)$, then $E(U',W')=\emptyset$. Since $\delta(G[W'])\leq n-5$, we have that 
	$\Delta(\overline{G}[W'])\geq 4$. Let $w_2$ be a vertex of degree $\delta(G[W'])$ and $w_3,w_4,w_5,w_6\in N_{\overline{G}[W']}(w_2)$, then the vertices 
	$u_2,u_3,u_4,u_5,w_2,w_3,w_4,w_5,w_6$ will form a $W_8$ in $\overline{G}$ with the hub $w_2$, a contradiction, and hence $G$ contains $S_n(2,1)$.
	
	Next, we prove that $G$ must contain $S_n(3)$. 
	Assume to the contrary that $G$ contains no $S_n(3)$. 
	Let $a=\max\{|N_U(w_i)|:1\leq i\leq n+1\}$. Let $w_{n+1}$ be a vertex in $W$ with $a$
	neighbours in $U$. Then, by a similar argument above, 
	$W_1=W-\{w_{n+1}\}$ has a vertex $w_1$ with degree $n-5$. 
	Let $w_2,w_3,w_4,w_5\notin N_{G[W_1]}(w_1)$. Consider the following cases.\\
	(a) $a \geq3$. Let $u_1,u_2,u_3 \in N_U(w_{n+1})$. 
	Since $G$ contains no $S_n(3)$ and $N(u_0)=U$, then 
	there will be no edges between $\{u_0,u_1,u_2,u_3\}$ and $\{w_1,w_2,w_3,w_4,w_5\}$.
	Therefore, the vertices $u_0$, $u_1$, $u_2$, $u_3$, $w_1$, $w_2$, $w_3$, $w_4$, $w_5$ will form a $W_8$ 
	$\overline{G}$ with the hub $w_1$ and $u_1w_2u_2w_3u_3w_4u_0w_5u_1$ as its cycle, a contradiction.\\
	(b) $a=2$. Let $u_1,u_2\in N_U(w_{n+1})$. Since $G$ contains no $S_n(3)$, then 
	there will be no edges between $\{u_1,u_2\}$ and $\{w_1,w_2,w_3,w_4,w_5\}$. 
	We could assume $N_U(w_1)\subset\{u_{n-4},u_{n-3}\}$ since $N_U(w_1)\leq a=2$. 
	Since $n\geq 9$, we could have $u_3\notin N_U(w_1)$. 
	Since $G$ contains no $S_n(3)$, let $N_{W_1}(u_3)\subseteq\{w_2\}$. 
	Therefore, the vertices $u_1,u_2,u_3,u_4,w_1,w_2,w_3,w_4,w_5$ will form a $W_8$ in 
	$\overline{G}$ with
	the hub $w_1$ and $u_1w_2u_2w_3u_3w_4u_0w_5u_1$ as its cycle, a contradiction. \\
	(c) $a=1$. Let $N_U(w_1)\subseteq\{u_{n-3}\}$, then $u_2,u_3,u_4,u_5\notin N_U(w_1)$. Since $a=1$, let $N_U(w_i)\subseteq\{u_i\}$ for $i=2,3,4,5$. Therefore 
	the vertices $u_2,u_3,u_4,u_5,w_1,w_2,w_3,w_4,w_5$ will form a $W_8$ in 
	$\overline{G}$ with the hub $w_1$ and $u_2w_3u_5w_4u_3w_2u_4w_5u_2$ as its cycle, 
	a contradiction.
\end{proof}
\begin{center}
	\subsection*{Acknowledgment}
\end{center}
This research was partially supported under WCU Program managed by Institut Teknologi Bandung, Ministry of Research, Technology and Higher Education, Indonesia.


\begin{thebibliography}{99}
\footnotesize

\bibitem{BON71}J.A. Bondy, Pancyclic graphs, \textit{J. Combin. Theory, Ser. B}, 80-84, (1971).

\bibitem{SB01} Surahmat and E.T. Baskoro, On the Ramsey Number of a path or a star versus $W_4$ or $W_5$, \textit{Proceeding of the $12^\text{th}$ Australian Workshop on Combinatorial Algorithms}, Bandung, Indonesia, July 14-17 (2001), 174--179.

\bibitem{CZZ04a} Y. J. Chen, Y. Q. Zhang, and K.M. Zhang, The Ramsey numbers of stars versus wheels, \textit{European Journal of Combinatorics} {\bf 25} (2004), 1067--1075.

\bibitem{HBA05} Hasmawati, E.T. Baskoro, H. Assiyatun, Star-Wheel Ramsey numbers, {\em J. Combin. Math. Combin. Comput.} {\bf 55} (2005), 123--128.

\bibitem{ZCZ08} Y.Q. Zhang, Y.J. Chen, K. Zhang, The Ramsey number for stars of even order versus a wheel of order nine, \textit{European Journal of Combinatorics}, (2008), 1744--1754.

\bibitem{ZCC12} Y.Q. Zhang, T.C.E. Cheng, Y.J. Chen, The Ramsey number for stars of odd order versus a wheel of order nine, \textit{Discrete Mathematics Algorithms and Applications}, (2012).

\bibitem{BSM02} E.T. Baskoro, Surahmat, S.M. Nababan, and M. Miller, 
On Ramsey number for tree versus wheel of five and six vertices, \textit{Graphs and Combin.} {\bf 18}  (2002), 717--721.

\bibitem{CZZ04} Y.J. Chen, Y.Q. Zhang, and K.M. Zhang, The Ramsey number $R(T_n,W_6)$ for $\Delta(T_n)\geq n-3$, \textit{Applied Mathematics Letters}, (2004), 281--285.

\bibitem{CZZ05a} Y.J. Chen, Y.Q. Zhang, and K.M. Zhang, The Ramsey number $R(T_n,W_6)$ for small $n$, \textit{Utilitas Mathematica}, 67 (2005).

\bibitem{CZZ05b} Y.J. Chen, Y.Q. Zhang, and K.M. Zhang, The Ramsey number $R(T_n,W_6)$ for $Tn$ without certain deletable sets, \textit{Journal of System Science and Complexity}, 18 (2005) 95-101.

\bibitem{CZZ06} Y.J. Chen, Y.Q. Zhang, and K.M. Zhang, The Ramsey numbers of trees versus $W_6$ or $W_7$, \textit{European Journal of Combinatorics}, 27 (2006) 558-564.

\bibitem{SBB02} Surahmat, E. T. Baskoro, H.J. Broersma, The Ramsey numbers of large stars-like trees versus large odd wheels, {\em J. Combin. Math. Combin. Comput.} {\bf 65} (2008), 153--162.

\bibitem{ZBC16} Y. Zhang, H. Broersma, Yaojun Chen, On fan-wheel and tree-wheel Ramsey numbers, \textit{Discrete Mathematics}, (2016), 2284--2287.
\end{thebibliography}
\end{document}